\documentclass[11pt]{amsart}

\usepackage{color}

\usepackage{fullpage}
\usepackage{verbatim}
\usepackage{graphics}
\usepackage[pdftex]{graphicx}
\usepackage{amsmath, amssymb, amsfonts, amsthm}
\usepackage{url}
\usepackage{setspace}
\usepackage{algorithmic}
\usepackage{enumitem}
\setlist{nolistsep}

\def \PG[#1,#2]{PG(#1,#2)}
\def \AG[#1,#2]{AG(#1,#2)}

\newtheorem{theorem}{Theorem}

\newtheorem{lemma}[theorem]{Lemma}

\newtheorem{corollary}[theorem]{Corollary}
\newtheorem{proposition}[theorem]{Proposition}

\newcommand{\RR}{\ensuremath{\mathbb R}}

\newcommand{\vrts}{\mathcal S}

\newcommand{\lines}{\mathcal L}

\newcommand{\bis}{\mathcal B}

\newcommand{\en}{\mathcal Q}

\newcommand{\eps}{\varepsilon}
\newcommand{\F}{\mathbb{F}}
\newcommand{\IT}{\mathcal{T}}

\def\minus{\backslash}%F: or \setminus; I prefer \backslash

\providecommand\phantomsection{} % to create a References section in the pdf. 

\begin{document}

\title{Bisectors and pinned distances.}
\author{Ben Lund \and Giorgis Petridis}
\maketitle

\begin{abstract}
We prove, under suitable conditions, a lower bound on the number of pinned distances determined by small subsets of two-dimensional vector spaces over fields. For finite subsets of the Euclidean plane we prove an upper bound for their bisector energy. 
\end{abstract}

\let\thefootnote\relax\footnotetext{Work on this project by the first author was supported by NSF DMS grant 1344994 and by NSF DMS grant 1802787.
	The second author is supported by the NSF DMS Grant 1723016.}

\section[introduction]{introduction}

A well-known conjecture of Erd\H{o}s \cite{erdos1946sets} is that between the pairs of any $N$ points in $\RR^2$ occur at least $\Omega(N \log^{-1/2} (N))$ different distances.
In 2010, Guth and Katz \cite{guth2015erdos} showed that $\Omega(N \log^{-1}(N))$ distinct distances must occur.

Erd\H{o}s also made a refinement of this conjecture.
This refinement is that among any set $A$ of $N$ points in $\RR^2$, there must occur a point $a \in A$ such that the remaining points of $A$ are at $\Omega(N \log^{-1/2}(N))$ different distances from $a$.
This is often called the pinned distances conjecture.
The best known bound of $\Omega(N^{0.864\ldots})$ was proved by Katz and Tardos \cite{katz2004new}, who improved an earlier proof by Solymosi and T\'oth \cite{solymosi2001distinct}.

These problems have also been considered over other fields, particularly finite fields.
In general, we define the algebraic distance between two points $a,b \in \F^2$ to be
\[\|a-b\| = (a-b) \cdot (a-b) =  (a_1-b_1)^2 + (a_2-b_2)^2. \]
The distance number and pinned distance numbers of $A$ are
\[ \Delta(A) = |\{\|a-b\| : a, b \in A \}| \text{ and } \Delta_{\text{pin}}(A) = \max_{a \in A}|\{\|a-b\| : b \in A \}|.  \]

We study $\Delta_{\text{pin}}(A)$. A simple lower bound of $\Delta_{\text{pin}}(A) = \Omega(N^{1/2})$ comes from the observation that either some single circle contains $N^{1/2}$ points (giving at least $\frac{1}{2}N^{1/2}$ distinct distances from any point in the circle), or there are at least $N^{1/2} - 1$ distinct distances from each point.
This simple bound is tight over fields of positive characteristic, as can be seen by taking our set of points to be the Cartesian product of the prime subfield. In this case $\Delta(A)$ is contained in the prime subfield and hence $\Delta_{\text{pin}}(A) \leq \Delta(A) \leq N^{1/2}$.

There are two ways of proving a non-trivial pinned distance bound over fields of positive characteristic.
You can prove a lower bound that depends on the characteristic of the field, or you can assume that the number of points is small relative to the characteristic of the field.
In this paper, we are primarily interested in point sets that are relatively small.

The first work on the pinned variant of the distinct distance problem over finite fields was done by Bourgain, Katz, and Tao \cite{bourgain2004sum}.
Prior to this paper, the strongest general quantitative bound for small sets was by Stevens and de Zeeuw \cite{stevens2017improved}, who proved $\Delta_{\text{pin}}(A) = \Omega(N^{1/2+1/30})$ over any field of characteristic not equal to $2$.
Iosevich, Koh, Pham, Shen, and Vinh \cite{iosevich2018new} recently gave a stronger bound of $\Delta(A) = \Omega(N^{1/2 + 149/4214})$ for point sets in $\F_p^2$ for prime $p$ with $p \equiv 3 \mod 4 $.
Our main result is quantitatively stronger than that of Iosevich et. al., and applies in the general setting of Stevens and de Zeeuw.

\begin{theorem} \label{Pinned dist ff}
	Let $\F$ be a field with characteristic not equal to $2$.
	There exists an absolute positive constant $c>0$ with the following property. 
	Let $A \subset \F^2$.
	If $\F$ has positive characteristic $p $, then suppose that $|A| \leq p^{8/5}$.
	Then, either
	$$\{\|a - b\| : a,b \in A\} = \{0\},$$
	or there exists $a \in A$ such that
	$$|\{\|a - b\|: b \in A\}| \geq c |A|^{20/37}.$$
\end{theorem}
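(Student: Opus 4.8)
The plan is to bound the total \emph{distance energy} of $A$ and convert this into a pinned-distance lower bound by Cauchy--Schwarz. For each $a\in A$ set $E_a = |\{(b,c)\in A^2 : \|a-b\|=\|a-c\|\}|$. Cauchy--Schwarz gives $|\{\|a-b\|:b\in A\}|\ge |A|^2/E_a$, so passing to the best $a$,
$$\max_{a\in A}|\{\|a-b\|:b\in A\}| \ \ge\ \frac{|A|^3}{\sum_{a\in A}E_a}.$$
Hence, outside the degenerate case where all distances vanish, it suffices to prove $\sum_a E_a = O(|A|^{91/37})$, since $111/37-20/37 = 91/37$.

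The second step is geometric. For $b\neq c$, the condition $\|a-b\|=\|a-c\|$ says exactly that $a$ lies on the perpendicular bisector $B(b,c)=\{x:(x-\tfrac{b+c}{2})\cdot(c-b)=0\}$, which over any field of characteristic $\neq 2$ is an affine line. Writing each such line as $\{x\cdot n = k\}$ with $n=c-b$ and $k=(\|c\|-\|b\|)/2$, two pairs share a bisector precisely when the corresponding data $(n,k)$ are proportional. Thus, letting $m(\ell)$ be the number of pairs $(b,c)$ with $b\neq c$ and $B(b,c)=\ell$, and $n(\ell)=|A\cap\ell|$, the diagonal $b=c$ contributes $|A|^2$ and we obtain
$$\sum_{a\in A}E_a = |A|^2 + \sum_\ell m(\ell)\,n(\ell).$$
It remains to bound the incidence sum $I=\sum_\ell m(\ell)\,n(\ell)$ between $A$ and the multiset of bisector lines.

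I would control $I$ by a dyadic decomposition over the multiplicities $m(\ell)$, governed by two inputs. The first is a \emph{bisector energy} estimate for $\bi{A}=\sum_\ell m(\ell)^2$, say of the shape $\bi{A}\lesssim |A|^2 M + |A|^3$ with $M$ the largest number of points of $A$ on a single line or circle; this bounds the number of high-multiplicity bisectors via $|\{\ell:m(\ell)\ge\tau\}|\le \bi{A}/\tau^2$. The second is a point--line incidence bound over $\F$ of Stevens--de Zeeuw / Rudnev type, applied to $A$ and the distinct lines in each dyadic class to bound $\sum_\ell n(\ell)$. Splitting $I=\sum_j \sum_{m(\ell)\sim 2^j} m(\ell)\,n(\ell)$, inserting these two inputs in each range, and optimizing the split point is what should produce the exponent $91/37$ (equivalently $20/37$ after the Cauchy--Schwarz step). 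The hypothesis $|A|\le p^{8/5}$ is precisely what keeps the underlying point--plane incidence theorem of Rudnev in its valid range throughout this optimization.

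The heart of the argument, and the step I expect to be the main obstacle, is proving the bisector energy bound. Here I would use the proportionality characterization to express $\bi{A}$ as the number of quadruples $(a,b,c,d)\in A^4$ satisfying $b-a=\lambda(d-c)$ together with the matching relation $\|b\|-\|a\|=\lambda(\|d\|-\|c\|)$ on the norms; eliminating $\lambda$ turns this into a collinearity count that can be recast as an incidence problem between points and planes in $\F^3$, to which Rudnev's theorem applies. The degenerate contributions must be peeled off carefully: isotropic directions, where $n\cdot n=0$, are where the norm degenerates, and they are responsible both for the characteristic-$\neq 2$ hypothesis and for the escape clause $\{\|a-b\|:a,b\in A\}=\{0\}$; meanwhile pairs sharing a bisector with large multiplicity come from reflective symmetry and force many points of $A$ onto a common line or circle, i.e.\ a large value of $M$. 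Bounding these exceptional terms and feeding the resulting value of $M$ back into the optimization is the delicate part; the remainder is incidence geometry and bookkeeping.
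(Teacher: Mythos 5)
The outer shell of your argument matches the paper's: Cauchy--Schwarz reduces pinned distances to bounding the isosceles-triangle count $\sum_a E_a$, which you correctly rewrite as $\sum_\ell m(\ell)n(\ell)$ and propose to control by a dyadic decomposition driven by a bisector energy bound together with the Stevens--de Zeeuw incidence theorem; this is exactly the paper's Lemma~\ref{IT to L2} and Lemma~\ref{IT}. The gap is in the step you yourself flag as the main obstacle, and it is fatal in two ways. First, the target bound you write down, $\bi{A}\lesssim |A|^2M+|A|^3$, is trivial: one always has $\en\le |A|^3$, since given $(a,b)$ and $c$ the point $d$ is determined by reflecting $c$ over $\bis(a,b)$. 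Feeding $\en\le |A|^3$ into your dyadic computation only reproduces the Stevens--de Zeeuw exponent $1/2+1/30=8/15$, not $20/37$. What the argument actually requires is a genuine dichotomy (Corollary~\ref{Dichotomy} in the paper): either $\en=O(|A|^{3-1/37})$, or $M\ge |A|^{27/37}$, in which case one wins at once by pinning at a point of $A$ off the rich line or circle.

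Second, the route you propose to a nontrivial energy bound does not go through. The proportionality characterization turns $\en$ into a count of pairs of points of a multiset in $\F^3$ that are collinear with the origin, i.e.\ $\sum_\ell m(\ell)^2$; this is a coincidence count, not a point--plane incidence count, and Rudnev's theorem has no evident purchase on it. The underlying difficulty is that each pair $(a,b)$ determines its bisector \emph{uniquely}, so in the two-dimensional parameter space of reflections the object traced by a pair is a single point, and there is no incidence structure left to exploit at that level. The paper's actual mechanism is entirely different: it works with the varieties $S_{ac}\subset\F^2\times\F^2$ of Lund--Sheffer--de Zeeuw, proves (Lemma~\ref{lem:LSdZ}) that each pairwise intersection $S_{ac}\cap S_{a'c'}$ lies in a product of two concentric circles or two parallel lines, and then runs a third-moment H\"older argument on \emph{triple} intersections to obtain $I(P,V)=O(|P|+M^{2/3}|P|^{2/3}|V|^{2/3}+|P|^{2/3}|V|)$ (Proposition~\ref{Pts vars inc}). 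Moreover the resulting bound on $\en$ is expressed in terms of $\sum_r m_r^2$, which is bounded via $\IT$, which is in turn bounded via $\en$ itself; the dichotomy emerges only from closing this bootstrap, a structure absent from your plan. As written, the proposal therefore does not constitute a proof.
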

We remark that our proof of Theorem \ref{Pinned dist ff} remains valid if $\|\cdot\|$ is replaced by an arbitrary quadratic form. This is because there are no restrictions on the field $\F$ (except that it must not have characteristic 2) and so we may take it to be algebraically closed, in which case all quadratic forms are equivaleent.

Comparing the exponent in our bound to the earlier bounds, the Stevens and de Zeeuw exponent is $1/2 + 1/30 \approx 0.533$, the Iosevich et. al. exponent is $1/2 + 149/4214 \approx 0.535$, and our exponent is $1/2 + 3/74 \approx 0.541$.

Proving distance bounds over fields of characteristic $p$ in the case $p \equiv 1 \mod 4$ is usually more complicated than in the case $p \equiv 3 \mod 4$.
This arises from the fact that $-1$ is a square when $p \equiv 1 \mod 4$, so there are nontrivial solutions to $x^2 + y^2 = 0$, and hence the distance between distinct points can be $0$.
For most approaches to the distinct distance problem (such as the one followed in this paper), it is possible to adapt arguments for the $p \equiv 3 \mod 4 $ case to the general case with some effort.
The paper of Iosevich et. al. uses a result of Rudnev and Shkredov \cite{rudnev2018restriction} on the additive energy of a point set on a paraboloid that does not hold in the $p \equiv 1 \mod 4$ case, and so is likely to be harder to generalize.

For the case that $\F = \F_p$ is a finite field of prime order $p$, Hanson, Lund, and Roche-Newton \cite{hanson2016distinct} proved the complementary bound
\[
\Delta_{\text{pin}}(A) \geq c \min\left\{p , \frac{|A|^{3/2}}p \right\}
\]
(their result in fact holds over arbitrary finite fields). 
For $|A| \geq p^{8/5}$ this becomes $\min\{p, |A|^{7/8}\}$. 
Therefore, for prime order finite fields, the current state of affairs may be summarized as: 
\[
\Delta_{\text{pin}}(A) \geq c \min\{p , |A|^{20/37}\}.
\]
Petridis \cite{petridis2017pinned} obtained the better $c \min\{p , |A|^{3/4}\}$ lower bound when $A$ is a Cartesian product.

The key idea in the proof of Theorem \ref{Pinned dist ff} that enables us to improve the bound of Stevens and de Zeeuw is to bound the {\em bisector energy} of the point set.
We say the perpendicular bisector of points $a,b \in \F^2$ with $\|a-b\| \neq 0$ is the line
\[\bis(a,b) = \{x : \|a-x\| = \|b-x\| \}. \]
The bisector energy is the number of quadruples that share a bisector.
In other words, the bisector energy is
\[\en =  |\{(a,b,c,d) \in A^4 : \bis(a,b) = \bis(c,d)  \}. \]

Taking $A$ to be a collection of equally spaced points on a line or the vertices of a regular polygon, shows that $\en$ might be a constant multiple of $|A|^3$. 
We follow a dichotomy argument to prove Theorem~\ref{Pinned dist ff} . 
If $\en \leq |A|^{3-\delta}$ for some absolute $\delta>0$, then the standard argument employed by Stevens and de Zeeuw (which goes back at least to \cite{erdos1975some}) can be improved. 
The key to our result is to show that if, on the other hand, $\en \geq |A|^{3-\delta}$, then there exists a line or a circle incident to many points of $A$. 
Taking any point of $A$ not on such a line or circle as the pin, gives many pinned distances. 
In completing this crucial step we utilize the set up of Lund, Sheffer, and de Zeeuw \cite{lund2015bisector}, first developed to bound bisector energy over $\RR$. 
We also use results of Hanson, Lund, and Roche-Newton \cite{hanson2016distinct}, who bounded the bisector energy for large sets over arbitrary finite fields.

Our second result is progress on a conjecture of Lund, Sheffer, and de Zeeuw \cite{lund2015bisector} on the bisector energy of a point set in $\RR^2$.
Lund, Sheffer, and de Zeeuw made a conjecture that relates $\en$ to the structure of the point set. If $M$ is the maximum number of points of $A$ incident to a line or to a circle, then they conjecture that $\en$ is in the of order $MN^2$. They proved the first non-trivial result in this direction by showing that there exists an absolute positive constant $C>0$  and, for all $\eps>0$, a positive constant $C_\eps >0$ such that  
\[
\en \leq C M N^2 + C_\eps M^{2/5} N^{2+2/5+ \eps}.
\]
This confirms the conjecture for $M \geq N^{2/3 + \delta}$ for any $\delta>0$. We prove an upper bound that is stronger when $M \geq N^{1/4}$, and which confirms the conjecture for $M \geq (\log(N) N)^{1/2}$.

\begin{theorem} \label{Real bis en}
	There exists an absolute positive constant $C>0$ with the following property. Let $A \subset \RR^2$ be a finite set of size $|A|=N$ and $\en$ be its bisector energy. If there are are at most $M$ elements of $A$ incident to any line or any circle, then
	\[
	\en \leq  C (M N^2 + \log^{1/2}(N) \, N^{2 + 1/2}).
	\]
\end{theorem}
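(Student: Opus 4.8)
The plan is to open up the energy as a sum over reflections and then separate a \emph{collinear} part, which yields the $MN^2$ term, from a \emph{concyclic} part, which yields the $\log^{1/2}(N)\,N^{5/2}$ term. For a line $\ell\subset\RR^2$ let $r_\ell$ be the number of ordered pairs $(a,b)\in A^2$ with $\bis(a,b)=\ell$; equivalently $r_\ell$ counts the points $a\in A$ whose reflection across $\ell$ again lies in $A$. Since distinct points have a unique perpendicular bisector, $\sum_\ell r_\ell=N(N-1)$ and, by definition, $\en=\sum_\ell r_\ell^2$. Thus $\en$ counts quadruples $(a,b,c,d)$ for which a single reflection sends $a\mapsto b$ and $c\mapsto d$; geometrically such a quadruple is an isosceles trapezoid with axis $\ell$ (the segments $ab$ and $cd$ are parallel and share the perpendicular bisector $\ell$). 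The first step is to record the resulting dichotomy: either all four points lie on one line, or the four points are distinct and, being an isosceles trapezoid, concyclic (up to the $O(N^2)$ degenerate quadruples with a repeated point, which I discard).

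First I would dispose of the collinear quadruples. If $a,b,c,d$ lie on a common line $g$, then in a linear coordinate on $A\cap g$ the condition becomes $x_a+x_b=x_c+x_d$, so the number of such quadruples on $g$ is the additive energy of $A\cap g$, at most $|A\cap g|^3$. Summing over all lines and using that no line meets $A$ in more than $M$ points, together with $\sum_g\binom{|A\cap g|}{2}=\binom{N}{2}$, bounds the collinear contribution by $O(MN^2)$. This is the source of the first term.

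The heart of the argument is the concyclic contribution. Here I would group the quadruples by their circumcircle $C$ and observe that two chords $ab$ and $cd$ of $C$ share a perpendicular bisector exactly when they are parallel; hence the number of quadruples with circumcircle $C$ is the number of ordered pairs of parallel chords, an additive-energy-type quantity on the $t:=|A\cap C|$ points of $C$ equal to $\sum_\sigma\nu_\sigma^2\le t^2\cdot\max_\sigma\nu_\sigma$, where $\nu_\sigma$ counts the chords in a fixed direction. I would then run a dyadic decomposition over the richness $t$, bounding the number of circles with $|A\cap C|\ge t$ through the counting and incidence bounds for points and circles (equivalently, by passing to the Elekes--Sharir parametrization and counting rich rigid motions by polynomial partitioning over $\RR$). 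The two genuinely different regimes --- a few highly symmetric circles, where $\max_\sigma\nu_\sigma$ is large, versus many circles each carrying only $\sim t^2$ parallel-chord pairs --- are separated so that the symmetric excess is charged back to the $MN^2$ term while the generic contribution is summed by Cauchy--Schwarz across the $O(\log N)$ dyadic scales, which is exactly what produces the factor $\log^{1/2}(N)$ in front of $N^{5/2}$. The bound of Hanson--Lund--Roche-Newton for large sets is used to absorb the top scales.

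The main obstacle is precisely this concyclic count. A naive combination --- the crude per-circle estimate $t^3$ against the counting bound $\#\{C:|A\cap C|\ge t\}\lesssim N^3/t^3$ coming from $\sum_C\binom{|A\cap C|}{3}\le\binom{N}{3}$ --- only returns the trivial bound of order $N^3\log N$, and even using the sharper $\sim t^2$ per circle does not by itself break $N^3$. The improvement therefore hinges on genuinely exploiting the cancellation in $\sum_\sigma\nu_\sigma^2$ and on the sharp, $\varepsilon$-free rich-motion incidence bound available over $\RR$ through polynomial partitioning. Equally delicate is the treatment of the degenerate algebraic loci thrown up by the polynomial method --- the lines and circles carrying many points of $A$ --- since these are exactly the configurations in which $M$ must re-enter; handling them directly, rather than through the Pach--Sharir curve-incidence bound used by Lund, Sheffer, and de Zeeuw (which costs the $M^{2/5}N^{2+2/5+\varepsilon}$ loss), is what allows the second term to be made independent of $M$.
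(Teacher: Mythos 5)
Your opening reduction is correct and nicely organized: writing $\en=\sum_\ell r_\ell^2$, discarding the $O(N^2)$ degenerate quadruples, and observing that a nondegenerate quadruple with $\bis(a,b)=\bis(c,d)=\ell$ is either contained in a line perpendicular to $\ell$ or in a circle centered on $\ell$ is exactly the geometry underlying Lemma \ref{lem:LSdZ}, and your treatment of the collinear part via additive energy on each line, giving $O(MN^2)$, is complete. The problem is that everything after that --- the concyclic contribution, which is the entire content of the theorem --- is not a proof but a list of tools, and the specific mechanisms you name do not hold together. First, the claim that ``the symmetric excess is charged back to the $MN^2$ term'' has no support: for a circle $C$, a large value of $\max_\sigma\nu_\sigma$ means that many pairs of $A\cap C$ are mirror-symmetric about one diameter, which is a statement about reflective symmetry of $A\cap C$ and has no a priori relation to $M$, the maximum number of points of $A$ on a line or circle. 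Second, the dyadic count over rich circles cannot close the argument on its own: the number of circles containing at least $t$ points is only controlled by $\sum_C\binom{|A\cap C|}{3}\le\binom{N}{3}$, i.e.\ by $N^3/t^3$ per scale, and even with the optimistic per-circle estimate $\sim t^2$ this sums to $\Theta(N^3)$, not $N^{5/2}$; beating $N^3$ requires controlling how the additive energies of the angle sets interact \emph{across} different circles, and your sketch contains no device for that. Third, the Elekes--Sharir/Guth--Katz rich-transformation machinery you invoke does not transfer to this problem in the way you suggest: reflections form a two-parameter family and each ordered pair $(a,b)$ determines a \emph{unique} reflection, so the set $\{\rho:\rho(a)=b\}$ is a point rather than a curve and there is no incidence problem among ``rich reflections'' to partition. (Also, the Hanson--Lund--Roche-Newton bound you propose to ``absorb the top scales'' is a finite-field estimate and plays no role over $\RR$.)

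For comparison, the paper handles the hard part by stratifying by the distance $r=\|a-c\|=\|b-d\|$ and bounding, for each $r$, the incidences between pairs $(b,d)\in\Pi_r$ and the varieties $S_{ac}$: Proposition \ref{Real pts vars} gives $I_M(\Pi_r,\vrts_r)=O(Mm_r+m_r^{3/2})$, where the key step is an application of Beck's theorem to the (inverted images of the) first coordinates $a$ of the varieties through a fixed $(b,d)$, which converts $\sum_x i(x)^2$ into a count of triples lying on poor circles. The Guth--Katz theorem then enters exactly once, as the bound $\sum_r m_r^2=O(N^3\log N)$, and Cauchy--Schwarz gives $\sum_r m_r^{3/2}=O(N^{5/2}\log^{1/2}N)$; this is the sole source of the $\log^{1/2}(N)\,N^{5/2}$ term. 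Your proposal never stratifies by distance, so it has no quantity playing the role of $\sum_r m_r^2$ and no route to that term. To repair your approach you would need either to reintroduce the distance stratification (at which point you are reconstructing the paper's argument) or to supply a genuinely new bound on $\sum_C E_+(\theta(A\cap C))$ that is sensitive to $M$; as written, that bound is assumed rather than proved.
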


The proof of Theorem~\ref{Real bis en} uses the set up, as well as the fundamental geometric observations in \cite{lund2015bisector}. Some aspects of the proof are, however, different. A powerful polynomial partitioning result of Fox et al. \cite{fox2014semi} is not used. As a result of this, the ``main'' term $MN^2$ appears for different reasons than in \cite{lund2015bisector}. 
We depend instead on Beck's theorem \cite{beck1983lattice}.
In this sense Theorem~\ref{Real bis en} has a more elementary proof, though it relies on the full strength of the Guth--Katz theorem for distinct distances \cite{guth2015erdos}. 

Lund, Sheffer, and de Zeeuw \cite{lund2015bisector} also considered the question of how many lines occur as perpendicular bisectors of a set of points in $\RR^2$.
Taking $A$ to be a collection of equally spaced points on a line or the vertices of a regular polygon shows that a set of $N$ points may determine as few as $N$ distinct perpendicular bisectors.
The conjecture of Lund, Sheffer, and de Zeeuw on this question is that, for a set $A$ of $N$ points in $\RR^2$, either $N/2$ points are contained in a single line or circle, or $A$ determines $\Omega(N^2)$ distinct perpendicular bisectors.
Lund, Sheffer, and de Zeeuw showed that a set of $N$ points, no more than $M$ of which are incident to any line or circle, determines $\Omega_\eps (\min (M^{-{2}/{5}} N^{{8}/{5}-\eps}, M^{-1} N^2 ))$ distinct perpendicular bisectors, for any $\eps > 0$, with the implied constant depending on $\eps$.
Lund \cite{lund2016refined} showed that a set of $N$ points in the plane, no more than $N/2$ of which are incident to any line or circle, determines $\Omega_\eps(N^{52/35 - \eps})$ distinct perpendicular bisectors, for any $\eps > 0$, with the implied constant depending on $\eps$.
A slight generalization of Theorem \ref{Real bis en}, combined with the arguments of \cite{lund2016refined}, gives the following improvement to Lund's bound.

\begin{theorem}\label{thm:distinctBisectors}
	There exists a constant $C>0$ with the following property.
	Let $A \subset \RR^2$ be a set of $N$ points.
	Either a single line or circle contains $N/2$ points of $A$, or $C N^{3/2}\log^{1/2}(N)$ distinct lines occur as perpendicular bisectors of pairs of points of $A$.
\end{theorem}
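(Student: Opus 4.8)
The plan is to derive Theorem~\ref{thm:distinctBisectors} from the bisector energy bound of Theorem~\ref{Real bis en} (in a slightly generalized form) by a multi-scale counting argument in the style of \cite{lund2016refined}. I would first set up the reflection correspondence: identify each bisector $\ell$ with the reflection $\refl_\ell$ it determines, so that an ordered pair $(a,b)\in A^2$ with $a\neq b$ has $\bis(a,b)=\ell$ exactly when $\refl_\ell(a)=b$. Over $\RR$ every pair of distinct points has $\|a-b\|\neq 0$, so writing $n_\ell=|\{(a,b)\in A^2 : a\neq b,\ \bis(a,b)=\ell\}|$ we have $\sum_\ell n_\ell = N(N-1)$ and $\en=\sum_\ell n_\ell^2$, and the quantity to bound below is $B=|\{\ell : n_\ell\geq 1\}|$. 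The crude move is Cauchy--Schwarz, $B\geq (\sum_\ell n_\ell)^2/\en = N^2(N-1)^2/\en$; in the regime $M\leq \log^{1/2}(N)\,N^{1/2}$ Theorem~\ref{Real bis en} gives $\en\leq 2C\log^{1/2}(N)\,N^{5/2}$ and hence $B\gtrsim N^{3/2}/\log^{1/2}(N)$. This is deficient in two ways: it is short of the claimed bound by a factor of $\log N$, and it degrades to $N^2/M$ (as small as $\sim N$) as $M$ approaches $N/2$. Both deficiencies are what the refined argument must repair.

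To repair them I would avoid applying Cauchy--Schwarz globally and instead decompose the bisectors dyadically by richness, $L_j=\{\ell : n_\ell\in[2^j,2^{j+1})\}$, so that $B=\sum_j|L_j|$, $\sum_j 2^j|L_j|\asymp N^2$, and $\en\asymp\sum_j 2^{2j}|L_j|$. The engine is the geometric observation of \cite{lund2015bisector} (reproved in the present paper): any four points whose two defining pairs share a bisector $\ell$ either lie on a common circle centred on $\ell$ or are collinear. Combined with Beck's theorem, this should bound the number of $2^j$-rich bisectors at each scale by a quantity that decays in $2^j$ faster than the energy alone forces, which is precisely what prevents all $\asymp N^2$ pairs from concentrating at the single scale $2^j\approx N^{1/2}\log^{1/2}(N)$ that makes Cauchy--Schwarz tight. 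The point is that the crude second moment is \emph{provably} lossy by a logarithm here: the $\sqrt N\times\sqrt N$ grid has $\en\asymp N^{5/2}$ yet determines $\asymp N^{3/2}\log^{1/2}(N)$ bisectors, because its richness is spread roughly evenly over the $\Theta(\log N)$ relevant scales. Summing the honest per-scale contributions across those $\Theta(\log N)$ scales is the step intended to convert the $\log^{-1/2}(N)$ loss into the $\log^{+1/2}(N)$ gain.

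To eliminate the dependence on $M$ when $N^{1/2}\log^{1/2}(N)\lesssim M<N/2$, I would iterate a peeling argument: locate a line or circle $\gamma$ carrying the maximal $M$ points, account separately for the bisectors it produces internally, and then recurse on the remaining $\geq N/2$ points with a strictly smaller maximum. A curve $\gamma$ is in fact cheap to charge: pairs on a common line have mutually parallel bisectors through distinct midpoints $(x_i+x_j)/2$, and pairs on a common circle have concurrent bisectors with distinct directions $\tfrac12(\theta_i+\theta_j)$, so in either case a sumset bound yields $\gtrsim M$ distinct bisectors from $\gamma$ alone. The slightly generalized form of Theorem~\ref{Real bis en} is exactly what is needed so that the energy bound continues to apply to the stripped-down configuration (and to the cross-pairs between $\gamma$ and the remainder) as the recursion proceeds.

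The hard part will be the middle step: extracting the extra logarithmic factor. Because the second-moment inequality is genuinely lossy by a logarithm, the proof cannot rest on $\en$ alone but must exploit the \emph{distribution} of richness across scales. The delicate work is to make the per-scale incidence bounds (via Beck's theorem and the concyclic/collinear dichotomy) quantitatively sharp enough that the pair-count budget $\sum_j 2^j|L_j|\asymp N^2$ is forced to spread over $\Theta(\log N)$ scales, each contributing $\gtrsim N^{3/2}/\log^{1/2}(N)$ bisectors, and then to carry out the bookkeeping so that the $\Theta(\log N)$ scales, the recursion on $M$, and the absolute constants all combine to the stated $C\,N^{3/2}\log^{1/2}(N)$ rather than merely to $N^{3/2}$ or $N^{3/2}/\log^{1/2}(N)$.
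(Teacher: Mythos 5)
Your high-level architecture is the right one, and it matches the paper's: the paper proves Theorem~\ref{thm:distinctBisectors} in one line, by running the arguments of \cite{lund2016refined} with Theorem~\ref{thm:refinedRealBisEnergy} substituted for Lemma~11 of that paper, and you correctly identify that the refined energy bound (restricted to pairs not on rich curves) is the needed input, that global Cauchy--Schwarz yields only $N^{3/2}\log^{-1/2}(N)$, and that the mid-range of $M$ needs separate treatment. But your proposal is not a proof: its decisive step --- converting the $\log^{-1/2}$ loss into the claimed $\log^{1/2}$ gain --- is explicitly deferred (``the delicate work is to make the per-scale incidence bounds\ldots sharp enough''), and the mechanism you gesture at cannot work as stated. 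If the only per-scale information is what the first and second moments give, namely $|L_j|\leq \min\bigl(N^2 2^{-j},\ \en_M 2^{-2j}\bigr)$ together with $|L_j|\leq B$, then the dyadic decomposition provably recovers nothing beyond Cauchy--Schwarz: summing $2^j\min\bigl(B,\en_M 2^{-2j}\bigr)$ over dyadic $j$ is $O\bigl((\en_M B)^{1/2}\bigr)$, so $N^2\lesssim(\en_M B)^{1/2}$ and one is back to $B\gtrsim N^4/\en_M\approx N^{3/2}\log^{-1/2}(N)$. To do better you need a genuinely new per-scale bound ruling out concentration of the $\approx N^2$ pairs at a single richness scale $2^j\approx N^{1/2}\log^{\pm 1/2}(N)$; nothing in your sketch supplies it. Beck's theorem and the concentric/parallel dichotomy of Lemma~\ref{lem:LSdZ} have already been spent inside Proposition~\ref{Real pts vars} to prove the energy bound itself, and your assertion that the pair budget is ``forced to spread over $\Theta(\log N)$ scales'' for \emph{every} qualifying set is exactly the missing content, not a consequence of anything you establish (your claim about the grid's bisector count is likewise unproved, and an extremal example cannot stand in for a universal structural statement).

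The peeling recursion for the mid-range of $M$ also does not close. A maximal curve yields only $\Omega(M)=O(N)$ internal bisectors, negligible against the target $N^{3/2}\log^{1/2}(N)$, and the recursion can stall at once: for $A$ a union of two lines with $N/2$ points each, one peel leaves a set lying entirely on a single line, so the whole burden falls on the cross-pairs, for which you again have only the energy bound of Theorem~\ref{thm:refinedRealBisEnergy} and hence, via Cauchy--Schwarz, only $N^{3/2}\log^{-1/2}(N)$ --- the same unresolved middle step. (There is also unaddressed bookkeeping in guaranteeing $|R_M|\gtrsim N^2$ at the scale of $M$ where the recursion hands off to the energy argument.) The paper sidesteps all of this by importing the counting machinery of \cite{lund2016refined} wholesale; the genuinely new contribution here is Theorem~\ref{thm:refinedRealBisEnergy}, proved via Proposition~\ref{Real pts vars} and the Guth--Katz bound \cite{guth2015erdos}. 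Your blind reconstruction of that machinery stops precisely where it becomes nontrivial, so the proposal has a genuine gap at its central step.
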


This is strictly better than Lund's bound, and improves the bound of Lund, Sheffer, and de Zeeuw for point sets with at most $N^{1/4}$ points on any line or circle.

\section[Proof of Theorem 1]{Proof of Theorem \ref{Pinned dist ff}}\label{sec:pinnedDistances}

Let us quickly recap the method of Lund, Sheffer, and de Zeeuw \cite{lund2015bisector} to bound bisector energy. For an ordered pair of points $(a,c) \in \F^2 \times \F^2$ with $\|a-c\| \neq 0$, we define
$$S_{ac} = \left \{(b,d): \bis(a,b) = \bis(c,d)\right \} \cup \left \{(a,d) : a \in \bis(c,d) \right \} \cup \left \{(b,d) : d \in \bis(a,b) \right \}.$$
We abuse terminology and call $S_{ac}$ a variety. 
It is important to note that if $(b,d) \in S_{ac}$, then $\|b-d\| = \|a-c\|$. 
For $r \in \F^\times$, we define
\[
\Pi_r = \{(b,d) \in A \times A : \|b-d\| = r \} \text{ and } \vrts_r = \{ S_{ac} : a,c \in A , \|a-c\| = r\},
\]
and denote by
\[
I(\Pi_r, \vrts_r) = |\{ ((b, d) , S_{ac}) : (b,d) \in S_{ac}  \}|
\]
the number of incidences between pairs of points and varieties.
Lund, Sheffer, and de Zeeuw observed that
%\begin{equation}\label{eq:energyFromIncidences} % Giorgis: did not use the equation further down
\[ \en = \sum_{r \in \F^\times} I(\Pi_r, \vrts_r). \]
%\end{equation}
Our aim is therefore to bound $I(\Pi_r, \vrts_r)$, which is zero unless $r$ lies in the distance set of $A$. We prove a slightly stronger statement.

\begin{proposition} \label{Pts vars inc}
Let $A \subset \F^2$. Suppose at most $M$ points of $A$ are incident to any circle or line. If $P \subseteq \Pi_r$, and $V \subseteq \vrts_r$, then
\[
I(P, V) = O(|P| + M^{2/3} |P|^{2/3} |V|^{2/3} + |P|^{2/3} |V|).
\] 
\end{proposition}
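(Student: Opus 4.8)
The plan is to follow the incidence framework of Lund, Sheffer, and de Zeeuw, but to replace their real polynomial partitioning with an algebraic point--plane incidence bound valid over an arbitrary field, in the spirit of the finite-field bisector estimates of Hanson, Lund, and Roche-Newton \cite{hanson2016distinct}. The first step is to record the geometric meaning of an incidence. For the main component of $S_{ac}$, a pair $(b,d)$ lies on $S_{ac}$ precisely when there is a single reflection $\refl$ with $\refl(a)=b$ and $\refl(c)=d$; equivalently, $\bis(a,b)=\bis(c,d)$. Thus an incidence encodes a reflection simultaneously carrying the ordered pair $(a,c)$ to the ordered pair $(b,d)$, and composing two such reflections produces an orientation-preserving rigid motion relating two points of $P$. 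This is the group structure underlying the Elekes--Sharir--Guth--Katz approach, and it is exactly the structure to which an algebraic point--plane incidence bound is adapted.

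The geometric heart of the argument is a multiplicity estimate: two distinct points $(b,d)$ and $(b',d')$ of $P$ are incident to at most $O(M)$ common varieties $S_{ac}$ with $a,c\in A$. Indeed, if $(b,d),(b',d')\in S_{ac}$ then there are reflections $\refl,\refl'$ with $\refl(a)=b,\ \refl(c)=d$ and $\refl'(a)=b',\ \refl'(c)=d'$, so the motion $g=\refl'\circ\refl$ satisfies $g(b)=b'$ and $g(d)=d'$ and is thereby determined up to the usual exceptions. For a fixed such $g$, the reflections realizing $g=\refl'\circ\refl$ form a one-parameter pencil, and as $\refl$ ranges over this pencil the point $a=\refl(b)$ sweeps a single line or circle (a circle centered at the fixed point of $g$ when $g$ is a rotation, a line when $g$ is a translation), following the fundamental computation of \cite{lund2015bisector}. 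Since at most $M$ points of $A$ lie on any line or circle, there are $O(M)$ admissible choices of $a$, each determining $c=\refl(d)$. This must be supplemented by separate treatment of the degenerate components of $S_{ac}$ and of the special configurations (parallel axes, or coincidences forced when a distance vanishes) that arise once $-1$ is a square.

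With the multiplicity bound in hand, I would linearize. Eliminating the proportionality parameter from $\bis(a,b)=\bis(c,d)$ expresses the incidence $(b,d)\in S_{ac}$ as the vanishing of a fixed bilinear pairing between a lift of $(a,c)$ and a lift of $(b,d)$ into a bounded-dimensional space, converting the problem into one of incidences between points and planes in $\F^3$. An algebraic point--plane bound (as used in \cite{hanson2016distinct}) then applies, provided $|A|$ is small relative to the characteristic, which is guaranteed by the hypothesis $|A|\le p^{8/5}$ of Theorem \ref{Pinned dist ff}, and provided the richness of the lines in the lift is controlled; the multiplicity estimate of the previous paragraph is precisely what bounds the number of rich lines and so feeds the factor of $M$ into the main term. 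A Cauchy--Schwarz symmetrization then distributes the estimate between $P$ and $V$ and produces the three terms $|P|$, $M^{2/3}|P|^{2/3}|V|^{2/3}$, and $|P|^{2/3}|V|$.

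The step I expect to be the main obstacle is making the multiplicity estimate fully rigorous over a general field and interfacing it cleanly with the algebraic bound. Over $\RR$ the claim that $a$ sweeps a genuine line or circle is transparent, but over fields where $-1$ is a square one must contend with isotropic directions, circles of zero radius, and pairs at distance $0$, each of which can make the relevant locus degenerate or higher-dimensional and thereby destroy the $M$-control. Quarantining these degeneracies, and verifying that the surviving configuration meets the collinearity hypothesis of the point--plane theorem with the correct dependence on $M$, is the delicate part; the remainder is bookkeeping with Cauchy--Schwarz and the dyadic and size conditions needed to stay below the characteristic threshold.
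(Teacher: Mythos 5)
Your approach diverges from the paper's, and it has a genuine gap at its core. The paper proves Proposition \ref{Pts vars inc} by a third-moment argument: H\"older's inequality gives $I(P,V)\le |P|^{2/3}\bigl(\sum_x i(x)^3\bigr)^{1/3}$, the third moment is controlled by counting triple intersections $S_{v_1}\cap S_{v_2}\cap S_{v_3}$ of pairwise distinct varieties, and Lemma \ref{lem:LSdZ} splits the count into the case where the three associated circles or lines coincide (contributing $O(M^2|V|^2)$) and the case where two of them are distinct (contributing $O(|V|^3)$); this is where the exponents $2/3$ and the factor $M^{2/3}$ come from. Your ``geometric heart'' --- that two distinct points of $P$ lie on at most $O(M)$ common varieties --- is correct and is exactly the $K_{2,M}$-freeness that follows from Lemma \ref{lem:LSdZ}, but the paper points out explicitly that this, via K\H{o}v\'ari--S\'os--Tur\'an, only yields the bounds in (\ref{eq:KSTbounds}) and hence only recovers the Stevens--de Zeeuw exponent. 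So the multiplicity estimate cannot carry the proof by itself, and everything rests on your linearization step.

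That step does not go through as described. The condition $\bis(a,b)=\bis(c,d)$ is the equality of two points in the dual projective plane (the bisector of $(a,b)$ has homogeneous coordinates $\bigl(2(b-a),\,\|b\|-\|a\|\bigr)$), so eliminating the proportionality parameter produces \emph{two} independent bilinear equations, not one: the incidence $(b,d)\in S_{ac}$ is a codimension-two condition, equivalent (up to the parallelogram degeneracy) to the simultaneous distance conditions $\|a-c\|=\|b-d\|$ and $\|a-d\|=\|b-c\|$. A single point--plane incidence bound in $\F^3$ in the style of Rudnev, as used in \cite{hanson2016distinct}, can only see one of these two equations, and counting solutions of one equation alone counts distance quadruples (essentially $\sum_r m_r^2$), which is far larger than the bisector count you need; there is no comparably strong incidence theorem for points and codimension-two flats that you could substitute. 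Moreover, even granting some point--plane bound of the shape $O(|P|^{1/2}|V|+k|V|)$ with $k=O(M)$, no Cauchy--Schwarz symmetrization will produce the term $M^{2/3}|P|^{2/3}|V|^{2/3}$: Cauchy--Schwarz yields exponents $1/2$, and the $2/3$ exponents in the statement are the signature of a third-moment computation. Finally, note that the proposition carries no hypothesis relating $|A|$ to the characteristic --- the restriction $|A|\le p^{8/5}$ enters the paper only later, through the Stevens--de Zeeuw point--line bound in Lemma \ref{IT} --- so importing that hypothesis here would prove a strictly weaker statement than the one claimed.
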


The proof relies on the following lemma, which was proved for $\F = \mathbb{R}$ by Lund, Sheffer, and de Zeeuw.
\begin{lemma} \label{lem:LSdZ}
Let $(a,c) \neq (a',c') \in \F^2 \times \F^2$ with $\|a-c\| = \|a'-c'\| \neq 0$. The intersection $S_{ac} \cap S_{a'c'} \subset \F^2 \times \F^2$ is contained in the Cartesian product of either a pair of concentric circles or a pair of parallel lines. The first circle or line contains both $a$ and $a'$, and the second circle or line contains both $c$ and $c'$. Moreover these circles or lines are uniquely determined by $(a,c)$ and $(a',c')$.
\end{lemma}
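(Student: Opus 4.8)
The plan is to reformulate membership in $S_{ac}$ in terms of reflections and then study the composition of two reflections. Recall that when $\|a-b\|\neq 0$ the line $\bis(a,b)$ is a genuine perpendicular bisector, and the reflection $\refl$ of $(\F^2,\|\cdot\|)$ across it is the unique non-trivial isometry fixing $\bis(a,b)$ pointwise; it satisfies $\refl(a)=b$. Consequently, if $(b,d)$ lies in the main component of $S_{ac}$, so that $\bis(a,b)=\bis(c,d)$, then the single reflection $\refl$ across this common line sends $a\mapsto b$ and $c\mapsto d$, which incidentally re-proves $\|b-d\|=\|a-c\|$. I would first record the algebraic description of the isometries of $(\F^2,\|\cdot\|)$ as the maps $x\mapsto Mx+t$ with $M\in O_2(\F)$, split into the orientation-preserving subgroup ($\det M=1$) and the reflections ($\det M=-1$).

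Now suppose $(b,d)\in S_{ac}\cap S_{a'c'}$ lies in the main component of both varieties, and let $\refl,\refl'$ be the reflections with $\refl(a)=b,\ \refl(c)=d$ and $\refl'(a')=b,\ \refl'(c')=d$. Applying the involution $\refl'$ gives $\refl'\refl(a)=a'$ and $\refl'\refl(c)=c'$, so the orientation-preserving isometry $g:=\refl'\circ\refl$ satisfies $g(a)=a'$ and $g(c)=c'$. The first key step is that $g$ is \emph{uniquely} determined by $(a,c)$ and $(a',c')$: if $g_1,g_2$ both work then $h=g_2^{-1}g_1$ has linear part $M\in SO_2(\F)$ and fixes $a$ and $c$, whence $(M-I)(a-c)=0$; since $\det(M-I)=2(1-\alpha)$ for $M=\left(\begin{smallmatrix}\alpha&-\beta\\\beta&\alpha\end{smallmatrix}\right)$ and $a\neq c$ (because $\|a-c\|\neq0$), this forces $M=I$ and then $h=\mathrm{id}$. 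This is one place where $\mathrm{char}\,\F\neq 2$ is essential.

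The second key step classifies $g$ and extracts the loci. Writing $g(x)=Rx+t$ with $R\in SO_2(\F)$, either $R\neq I$, in which case $\det(R-I)=2(1-\alpha)\neq0$ and $g$ is a rotation about the unique fixed point $O=(I-R)^{-1}t$, or $R=I$ and $g$ is translation by $v=t$. In the rotation case, since $g=\refl'\refl$ has fixed point $O$, the axes of $\refl$ and $\refl'$ cannot be parallel (that would make $g$ a translation), so they meet at a point fixed by both reflections, namely $O$; reflection across a line through $O$ preserves $\|\cdot-O\|$, so $b=\refl(a)$ lies on the circle centred at $O$ through $a$ and $d=\refl(c)$ on the concentric circle through $c$. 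Both circles contain $a'=g(a)$ and $c'=g(c)$ because $g$ fixes $O$ and is an isometry. In the translation case the two axes are parallel with common normal direction $n\parallel v$, and $b-a=\refl(a)-a$ and $d-c$ are both parallel to $n$, so $b$ and $d$ lie on the parallel lines through $a$ and $c$ in direction $v$, which contain $a'=a+v$ and $c'=c+v$. Since $O$, $R$ and $v$ are intrinsic to the determined isometry $g$, the resulting pair of concentric circles (or parallel lines) is uniquely determined by $(a,c)$ and $(a',c')$, as claimed.

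The main obstacle is the degenerate behaviour over fields in which $-1$ is a square, where $\|a-b\|=0$ is possible and the normal $b-a$ of $\bis(a,b)$ may be isotropic, so that $\refl$ is not defined and the reflection picture above breaks down. These isotropic and coincidence configurations (such as $b=a$ or $c=d$) are precisely what the two extra components $\{(a,d):a\in\bis(c,d)\}$ and $\{(b,d):d\in\bis(a,b)\}$ of $S_{ac}$ are designed to absorb, and I expect the bulk of the technical work to be a separate, direct check that the points of $S_{ac}\cap S_{a'c'}$ arising from these components still lie in the asserted Cartesian product. Concretely, I would phrase the entire argument through the elimination identity "$d$ is the image of $c$ under the reflection swapping $a$ and $b$", rewrite it as polynomial equations in $(b,d)$ that remain valid in the isotropic case, and verify that intersecting the analogous relation for $(a',c')$ forces $b$ (respectively $d$) onto a single circle or line. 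This algebraic formulation is uniform in $\F$ and sidesteps the need to normalise isotropic normals, which is the step I anticipate being most delicate.
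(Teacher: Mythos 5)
Your treatment of the main component is correct and is essentially the paper's own argument: compose the two reflections to obtain the unique positively oriented isometry $\tau$ with $\tau(a)=a'$ and $\tau(c)=c'$, classify $\tau$ as a rotation or a translation, and read off the concentric circles (centred at the fixed point of $\tau$) or the parallel lines (in the direction of the translation vector); uniqueness follows because the centre, respectively the direction, is intrinsic to $\tau$, which is determined by $(a,c)$ and $(a',c')$. The genuine gap is exactly the one you flag in your last paragraph: the degenerate components of $S_{ac}$ and $S_{a'c'}$ (incidences with $b=a$ or $d=c$) are not covered by your argument, and the algebraic ``elimination identity'' route you propose for them is never carried out. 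Since these components do contribute incidences in the proof of Proposition~\ref{Pts vars inc}, they cannot be omitted.

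The missing idea is small and stays entirely inside your reflection framework, so the heavier polynomial reformulation you anticipate is unnecessary. If $b=a$, then membership in the degenerate component forces $a\in\bis(c,d)$, so the reflection over $\ell_1=\bis(c,d)$ fixes $a$ and carries $c$ to $d$; it therefore still carries $(a,c)$ to $(b,d)$, exactly as the reflection over the common bisector does in the main case. Symmetrically, if $d=c$ one reflects over $\bis(a,b)$, and if both degeneracies occur then $(b,d)=(a,c)$ and one takes $\ell_1$ to be the line through $a$ and $c$, whose reflection is defined because its normal has the same (nonzero) norm as $a-c$. Choosing $\ell_2$ analogously for $(a',c')$, the identity $\tau=r_{\ell_2}\circ r_{\ell_1}$ and the rest of your analysis go through verbatim; this is precisely how the paper handles it. Two further points are worth making explicit in your write-up: first, $\ell_1\neq\ell_2$ (otherwise $(a,c)=(a',c')$), so $\tau$ is not the identity and the rotation/translation dichotomy is exhaustive; second, in the rotation case the fixed point $p$ may equal $a$ (whence $a=b=a'$), or one may have $\|a-p\|=0$, in which case the ``circle'' through $a,a',b$ is degenerate --- a union of two isotropic lines --- which is still consistent with the statement and harmless in the application, but should be acknowledged rather than silently assumed away.
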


The lemma implies that the incidence graph on $P \times V$ contains no $K_{2,M}$ or $K_{M,2}$ (there is clear duality between pairs of points and varieties), which leads to the bounds
\begin{equation}\label{eq:KSTbounds}
I(\Pi_r, \vrts_r) = O(M^{1/2} |P|^{1/2} |V| + |P|) \text{ and } I(\Pi_r, \vrts_r) = O(M^{1/2} |V|^{1/2} |P| + |V|).
\end{equation}
Inserting these bounds in the calculations that follow retrieves the lower bound of  Stevens and de Zeeuw on pinned distances. This is why we need Proposition~\ref{Pts vars inc}. To prove it we will look at triple intersections of varieties.

In place of Proposition \ref{Pts vars inc}, Lund, Sheffer, and de Zeeuw gave the bound
\begin{equation}\label{eq:LSZincBound}
I(\Pi_r, \vrts_r) = O_\eps(M^{2/5}|\Pi_r|^{7/5 + \eps} + M|\Pi_r|),
\end{equation}
for any $\eps > 0$, with the implied constant depending on $\eps$.
They prove this bound by combining (\ref{eq:KSTbounds}) with the method of polynomial partitioning.
Polynomial partitioning is not available over general fields, but more importantly it relies on the asymmetry between the dependence on $|P|$ and $|V|$ in (\ref{eq:KSTbounds}).
Since the main term in Proposition \ref{Pts vars inc} is symmetric in its dependence on $|P|$ and $|V|$, it cannot be improved by polynomial partitioning, even in the case $\F = \mathbb{R}$.
Note that Proposition \ref{Pts vars inc} is  stronger than (\ref{eq:LSZincBound}) for $M \geq |\Pi_r|^{1/4}$.

\subsection{Isotropic lines}
One difficulty that arises when working over arbitrary fields is that there may be nonzero vectors $v$ with $\|v\| = 0$.
Indeed, suppose that $i = \sqrt{-1}$ is an element of $\F$.
Then, $\|(1,i)\| = \|(1,-i)\| = 0$.
These are {\em isotropic vectors}.

Let $v \in F^2$ be isotropic, and let $a \in F^2$ be a arbitrary.
Let $\ell$  be the line defined by $a + \lambda v$, where $\lambda$ ranges over $\F$.
For any $b,c \in \ell$, the vector $b-c$ is isotropic.
In this case, $\ell$ is an {\em isotropic line}.

In general, if $\F$ includes $\sqrt{-1}$, then there are two isotropic lines through each point.
Otherwise, there are no isotropic lines.
We collect here a few facts about isotropic lines.

\begin{lemma}\label{lem:no011triangle}
Suppose that $a \neq b$ and $\|a-b\| = 0$.
Then, either $\|a\| = \|b\| = 0$, or $\|a\| \neq \|b\|$.
\end{lemma}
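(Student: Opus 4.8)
The plan is to prove the statement in the sharpened form: assuming $a \neq b$, $\|a-b\|=0$, and $\|a\|=\|b\|$, I will show that in fact $\|a\|=\|b\|=0$. This rules out the only remaining possibility $\|a\|=\|b\|\neq 0$ and thus gives the lemma. Throughout I work with the symmetric bilinear form $x\cdot y = x_1y_1+x_2y_2$, so that $\|x\|=x\cdot x$, and I rely on the two polarization identities $\|a\|-\|b\| = (a-b)\cdot(a+b)$ and $\|a+b\|+\|a-b\| = 2(\|a\|+\|b\|)$, both of which hold over any field.

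First I record a reduction: if $\F$ does not contain a square root of $-1$, then $x^2+y^2=0$ forces $x=y=0$, so the hypothesis $a\neq b$ with $\|a-b\|=0$ is vacuous and there is nothing to prove. Hence I may assume $\sqrt{-1}\in\F$. Set $v=a-b$, which is nonzero (since $a\neq b$) and isotropic (since $\|v\|=\|a-b\|=0$).

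Next I exploit the hypothesis $\|a\|=\|b\|$. By the first polarization identity this says exactly $v\cdot(a+b)=0$, i.e. $a+b$ lies in the orthogonal complement $v^\perp$. Since the Gram matrix of $\cdot$ is the identity, the form is nondegenerate, so $v^\perp$ is one-dimensional; and because $v$ is isotropic we have $v\in v^\perp$, forcing $v^\perp = \mathrm{span}(v)$. Therefore $a+b=\lambda v$ for some $\lambda\in\F$, and consequently $\|a+b\|=\lambda^2\|v\|=0$.

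Finally I combine $\|a+b\|=0$ and $\|a-b\|=0$ with the parallelogram identity to obtain $2(\|a\|+\|b\|)=0$, hence $\|a\|+\|b\|=0$. Together with $\|a\|=\|b\|$ this yields $2\|a\|=0$, and since $\mathrm{char}(\F)\neq 2$ we conclude $\|a\|=\|b\|=0$, as required. The only points that need care are the vacuous case when $\F$ lacks $\sqrt{-1}$, the one-dimensionality of $v^\perp$ (which is where nondegeneracy of the form is used), and the single division by $2$ at the end (which is where $\mathrm{char}(\F)\neq 2$ enters). I do not anticipate a genuine obstacle beyond keeping track of these; the content is the observation that an isotropic $v$ is orthogonal precisely to its own span, so $v\cdot(a+b)=0$ already propagates isotropy from $a-b$ to $a+b$.
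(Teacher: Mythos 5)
Your proof is correct, and it takes a genuinely different route from the paper's. The paper argues by contradiction: assuming $\|a\|=\|b\|\neq 0$, it picks a non-isotropic vector $c$ orthogonal to $b$, writes $a=\lambda_1 b+\lambda_2 c$ in the basis $\{b,c\}$, and uses $\|a-b\|=0$ to force $\lambda_1=1$ and $\lambda_2=0$, i.e.\ $a=b$, contradicting $a\neq b$. You instead prove the contrapositive-style sharpening directly and coordinate-free: polarization turns $\|a\|=\|b\|$ into $(a-b)\cdot(a+b)=0$, the key observation that an isotropic vector $v$ satisfies $v^\perp=\mathrm{span}(v)$ (by nondegeneracy in dimension $2$) forces $a+b$ to be a multiple of $a-b$ and hence isotropic, and the parallelogram law then gives $2(\|a\|+\|b\|)=0$. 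The paper's computation is self-contained but slightly fussy (and its normalization $\|c\|=1$ is not always achievable over a general field, though only $\|c\|\neq 0$ is actually needed there); your argument isolates the structural fact that isotropy propagates along $v^\perp=\mathrm{span}(v)$, which is cleaner and is the same mechanism underlying the paper's later discussion of isotropic lines. Both use $\mathrm{char}(\F)\neq 2$ and nondegeneracy in the same essential way; your preliminary reduction to the case $\sqrt{-1}\in\F$ is harmless but not actually needed, since the rest of your argument never uses it.
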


\begin{proof}
Suppose, for contradiction, that $\|a\| = \|b\| \neq 0$.
Then, there is a vector $c$ with $\|c\| = 1$ and $c \cdot b = 0$.
Note that $b$ and $c$ are a basis for $\F^2$.
Let $a = \lambda_1 b + \lambda_2 c$.
Then,
$$0 = \|a-b\| = \|a\| + \|b\| - 2 (a \cdot b) = 2\|a\| - 2(a \cdot b),$$
so 
$$\|a\| = a \cdot b = (\lambda_1 b + \lambda_2 c) \cdot b = \lambda_1 \|a\| +  \lambda_2 (c \cdot b) = \lambda_1 \|a\|.$$
Hence, $\lambda_1 = 1$.
Using this,
$$\|a\| = \|b + \lambda_2 c\| = \|b\| + \lambda_2^2 \|c\| + 2\lambda_2 (b \cdot c) = \|a\| + \lambda_2^2,$$
so $\lambda_2 = 0$.
Hence $a=b$, which contradicts assumption of the lemma.
\end{proof}

\begin{lemma}\label{lem:noIsotropicBisector}
Let $a \neq b$, $\|a\| = \|b\| \neq 0$, $c \neq 0$, $\|c\| = 0$. Then, $\|a - c\| \neq \|b - c\|$.
\end{lemma}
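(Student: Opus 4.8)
The plan is to reduce the claimed inequality to a statement purely about the bilinear form, and then to invoke Lemma~\ref{lem:no011triangle} to close the argument. First I would expand both distances using the identity $\|u - v\| = \|u\| + \|v\| - 2(u \cdot v)$. Since $\|c\| = 0$, this gives $\|a - c\| = \|a\| - 2(a \cdot c)$ and $\|b - c\| = \|b\| - 2(b \cdot c)$. Subtracting these and using $\|a\| = \|b\|$ yields
\[
\|a - c\| - \|b - c\| = -2\big((a - b) \cdot c\big).
\]
Because the characteristic is not $2$, the two distances coincide if and only if $(a - b) \cdot c = 0$. So it suffices to show that $a - b$ is \emph{not} orthogonal to $c$.

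Next I would argue by contradiction: suppose $(a - b) \cdot c = 0$. The key observation is that $c$ is isotropic, hence orthogonal to itself, $c \cdot c = \|c\| = 0$. In two dimensions with a nondegenerate form the orthogonal complement of the nonzero vector $c$ is one-dimensional, and since $c$ itself lies in it, that complement is exactly the line $\langle c \rangle$. Thus $a - b \in \langle c \rangle$, i.e.\ $a - b = \lambda c$ for some $\lambda \in \F$; as $a \neq b$ and $c \neq 0$ we have $\lambda \neq 0$, and therefore $\|a - b\| = \lambda^2 \|c\| = 0$. Now $a \neq b$ together with $\|a - b\| = 0$ places us precisely in the hypothesis of Lemma~\ref{lem:no011triangle}, which forces either $\|a\| = \|b\| = 0$ or $\|a\| \neq \|b\|$. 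Both possibilities contradict the standing assumption $\|a\| = \|b\| \neq 0$, completing the argument.

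I expect the only delicate point to be the orthogonality step, where one must use nondegeneracy of the form in dimension $2$ to deduce $\langle c \rangle^\perp = \langle c \rangle$ from the single relation $c \cdot c = 0$. If one prefers to avoid this abstraction, the same contradiction is reached concretely: writing $a = b + \lambda c$ and expanding $\|a\| = \|b\|$ gives $2\lambda (b \cdot c) = 0$, hence $b \cdot c = 0$ since $\lambda \neq 0$ and the characteristic is not $2$; then $b \cdot c = c \cdot c = 0$ with $c \neq 0$ forces $b \in \langle c \rangle$ by nondegeneracy, so $\|b\| = 0$, again contradicting $\|b\| \neq 0$.
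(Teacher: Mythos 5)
Your proof is correct and follows essentially the same route as the paper's: expand both distances using $\|c\|=0$, reduce equality to $(a-b)\cdot c=0$, use nondegeneracy of the form in dimension $2$ to conclude $a-b=\lambda c$ with $\lambda\neq 0$, hence $\|a-b\|=0$, and then contradict Lemma~\ref{lem:no011triangle}. The extra detail you supply about $\langle c\rangle^{\perp}=\langle c\rangle$ is a welcome elaboration of the step the paper states more tersely.
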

\begin{proof}
$$\|a-c\| = \|a\| + \|c\| - 2(a \cdot c) = \|a\| - 2(a \cdot c).$$
Similarly,
$$\|b-c\| = \|a\| - 2(b \cdot c).$$

Suppose, for contradiction, that $\|a-c\| = \|b-c\|$.
Then, $a \cdot c = b \cdot c$.
Hence, $(a-b) \cdot c = 0 = c \cdot c$.
Since $\cdot$ is a nondegenerate bilinear form, this implies that there is some $\lambda \neq 0$ so that $a-b = \lambda c$.
Hence, $\|a-b\| = 0$.
Applying Lemma \ref{lem:no011triangle} to the vectors $a,b$ leads to a contradiction.
\end{proof}

\begin{corollary}\label{cor:noIsotropicBisector}
Perpendicular bisectors are not isotropic.
\end{corollary}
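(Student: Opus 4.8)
The plan is to argue by contradiction and reduce to the exact situation covered by Lemma~\ref{lem:noIsotropicBisector}. Suppose $\bis(a,b)$ is a perpendicular bisector that is isotropic, so $\|a-b\|\neq 0$ and the line $\bis(a,b)$ has an isotropic direction $w$ (that is, $w\neq 0$ and $\|w\|=0$). Since Lemma~\ref{lem:noIsotropicBisector} is stated for points of equal nonzero norm, I would first translate the configuration so that the bisector passes through the origin, which is what unlocks that equal-norm hypothesis.

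First I would set $m=(a+b)/2$ and $a'=a-m=(a-b)/2$, $b'=b-m=-a'$. Translation preserves the direction of a line, so $\bis(a',b')=\bis(a,b)-m$ is again isotropic with the same direction $w$. Because $\|a-b\|\neq 0$ we have $a'\neq 0$, hence $a'\neq b'$ and $\|a'\|=\|b'\|=\|a-b\|/4\neq 0$. A short computation using $b'=-a'$ and $\mathrm{char}\,\F\neq 2$ shows
\[
\bis(a',b')=\{y:a'\cdot y=0\},
\]
the line through the origin orthogonal to $a'$; in particular it passes through $0$.

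Next I would observe that the isotropic direction $w$ is itself a point of this bisector: since $\bis(a',b')$ passes through the origin and has direction $w$, every scalar multiple of $w$ lies on it, so $w\in\bis(a',b')$, which means $\|a'-w\|=\|b'-w\|$. But $w$ satisfies $w\neq 0$ and $\|w\|=0$, while $\|a'\|=\|b'\|\neq 0$, so Lemma~\ref{lem:noIsotropicBisector} (applied with $a',b',w$ in place of $a,b,c$) gives $\|a'-w\|\neq\|b'-w\|$, a contradiction. Hence no perpendicular bisector is isotropic.

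The content of this argument is entirely in the setup rather than in any computation: the one genuine point to get right is the reduction to a bisector through the origin, which makes the equal-norm hypothesis of Lemma~\ref{lem:noIsotropicBisector} available, together with the identification of the isotropic direction vector as an actual point on that bisector. I would also note that a lemma-free route is available: expanding $\|a-x\|=\|b-x\|$ shows $\bis(a,b)$ has normal vector $a-b$, so an isotropic direction $w$ of the bisector would satisfy $(a-b)\cdot w=0$ and $\|w\|=0$; since in two dimensions a nonzero isotropic vector spans its own orthogonal complement, this forces $a-b\in\langle w\rangle$ and hence $\|a-b\|=0$, again a contradiction.
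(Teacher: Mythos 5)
Your proof is correct and takes essentially the same route as the paper: translate so that the bisector passes through the origin (you use the midpoint, the paper an arbitrary point of $\ell$), then apply Lemma~\ref{lem:noIsotropicBisector} to a nonzero isotropic point of the translated bisector to get a contradiction. The lemma-free alternative you sketch at the end is also valid, but the main argument matches the paper's.
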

\begin{proof}
Suppose that $\ell$ is the perpendicular bisector of $a,b$.
By the definition of a perpendicular bisector, we exclude the possibility that $\|a - b\| = 0$.
Consequently, if $c \in \ell$, then $\|a-c\| = \|b-c\| \neq 0$.
By translating, we may assume that $0 \in \ell$, so that $\|a\| = \|b\| \neq 0$.
Let $c \in \ell$ be a non-zero point, so that $\|a-c\| = \|b-c\| \neq 0$.
Then, by Lemma \ref{lem:noIsotropicBisector}, since $\|a\|=\|b\| \neq 0$, we have that $\|c\| \neq 0$.
Hence, $\ell$ is not isotropic.
\end{proof}

\subsection{Rotations and reflections}

The proof of Lemma \ref{lem:LSdZ} relies on the properties of transformations of $\F^2$ that preserve $\|a-b\|$ for all $a,b \in \F^2$.
We call these transformations {\em isometries}.

We assume a few basic facts about the orthogonal group associated to an arbitrary quadratic form over a field with characteristic not equal to $2$; see, for example, chapters 4-6 in \cite{grove2002classical}.
Any linear transformation of $\F^2$ that preserves $\|\cdot\|$ belongs to an associated orthogonal group.
As matrices, the orthogonal transformations have determinant $1$ or $-1$.
If the determinant is $1$, the transformation is {\em positively oriented}.

If $\tau$ is an orthogonal transformation, then $a \rightarrow \tau(a) + v$ preserves $\|a-b\|$ for all $a,b \in \F^2$.
Positively oriented orthogonal transformations are rotations around the origin and the identity, so the positively oriented isometries of $\|a-b\|$ are rotations around any point in $\F^2$, pure translations, and the identity.

For any vector $u$ with $\|u\| \neq 0$, the orthogonal transformation $\sigma_u$ with
$$\sigma_u(a) = a - 2 \frac{a \cdot u}{\|u\|} u$$
is the reflection over the non-isotropic line
$$\{v: u \cdot v = 0\}$$
through the origin.
Note that $\sigma_u(u) = -u$ and $\sigma_u(v) = v$ if $u \cdot v = 0$.
Note also that $\sigma_u(a) = \sigma_{\lambda u}(a)$ for $\lambda \neq 0$.
Reflections over lines through the origin have order $2$, and the composition of reflections is positively oriented.

More generally, let $u$ with $\|u\| \neq 0$, let $w$ with $u \cdot w = 0$, and let $\lambda_1 \in \F$.
The transformation
\[r_\ell(a) = \sigma_u(a)+ 2\lambda_1 u\]
is a reflection over the line
\[\ell = \{\lambda_1 u + v : u \cdot v = 0\}.  \]
In general, for any non-isotropic line $\ell$, we denote the reflection over $\ell$ by $r_\ell$.

\begin{lemma} \label{Lemma 9}
	For any two points $a,b \in \F^2$ with $\|a-b\| \neq 0$ and non-isotropic line $\ell$, we have $r_\ell(a) = b$ if and only if $\ell = \bis(a,b)$.
\end{lemma}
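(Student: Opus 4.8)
The plan is to prove both directions of the equivalence directly from the explicit formula for $r_\ell$, using the defining property that reflections are isometries fixing every point of $\ell$. The statement is geometrically transparent — reflecting $a$ across the perpendicular bisector of $a$ and $b$ should land on $b$ — so the work is mostly algebraic bookkeeping with the reflection formula $r_\ell(a) = \sigma_u(a) + 2\lambda_1 u$, where $\ell = \{\lambda_1 u + v : u \cdot v = 0\}$ and $\|u\| \neq 0$.

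\emph{First I would prove the ``if'' direction}: assume $\ell = \bis(a,b)$ and show $r_\ell(a) = b$. Since $\|a - b\| \neq 0$, Corollary~\ref{cor:noIsotropicBisector} guarantees $\ell$ is non-isotropic, so $r_\ell$ is well-defined. The key observations are that $r_\ell$ is an isometry fixing $\ell$ pointwise, and that $b' := r_\ell(a)$ satisfies $\|b' - x\| = \|a - x\|$ for every $x \in \ell$ (because $r_\ell(x) = x$ and $r_\ell$ preserves $\|\cdot\|$). But by definition of $\bis(a,b)$, every $x \in \ell$ satisfies $\|a - x\| = \|b - x\|$, hence $\|b' - x\| = \|b - x\|$ for all $x \in \ell$. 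To conclude $b' = b$, I would compute directly using the formula: writing $a = \lambda_1 u + \mu u + v$ in the decomposition adapted to $\ell$, one checks that $r_\ell$ reflects the $u$-component across the value $\lambda_1$ while fixing the orthogonal component, which is precisely the map sending $a$ to the point whose perpendicular bisector with $a$ is $\ell$. Alternatively, and more cleanly, I would verify that both $b$ and $b'$ are the \emph{unique} point $p$ with $\|p - x\| = \|a - x\|$ for all $x \in \ell$ and $p \neq a$ (uniqueness following because two such points would force a collinearity contradiction with the non-degeneracy of the form).

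\emph{For the ``only if'' direction}, assume $r_\ell(a) = b$ and show $\ell = \bis(a,b)$. Since $r_\ell$ fixes every point of $\ell$ and is an isometry, for any $x \in \ell$ we have $\|b - x\| = \|r_\ell(a) - r_\ell(x)\| = \|a - x\|$, so $x \in \bis(a,b)$; thus $\ell \subseteq \bis(a,b)$. As $\ell$ is non-isotropic (by construction) and $\bis(a,b)$ is a line, two distinct points in the intersection force $\ell = \bis(a,b)$ — and $\ell$ contains at least two points since $\F$ has characteristic not $2$ and $\|u\| \neq 0$ guarantees the orthogonal complement is one-dimensional. I also need $\|a - b\| \neq 0$ to hold, which is given by hypothesis, ensuring $\bis(a,b)$ genuinely is a perpendicular bisector line rather than something degenerate.

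\emph{The main obstacle} I anticipate is the uniqueness step in the ``if'' direction: showing that the isometry condition $\|p - x\| = \|a - x\|$ for all $x \in \ell$ pins down $p$ to exactly two points $\{a, b\}$, and ruling out $p = a$. Over $\RR$ this is clear geometrically, but over a general field I must argue it algebraically. Expanding $\|p - x\| = \|a - x\|$ gives $\|p\| - \|a\| = 2(p - a) \cdot x$ for all $x \in \ell$; since $\ell$ contains the direction vector $v$ with $u \cdot v = 0$, this forces $(p - a) \cdot v = 0$, so $p - a$ is parallel to $u$ (using non-degeneracy and that $v$ spans $u^\perp$). Writing $p - a = t u$ and substituting a single base point of $\ell$ then yields a quadratic in $t$ with the two roots $t = 0$ (giving $p = a$) and the value corresponding to $b$; care is needed here because $\|u\| \neq 0$ is exactly what makes this quadratic non-degenerate and prevents isotropic pathologies. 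Once this is handled, both inclusions close immediately.
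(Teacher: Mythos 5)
Your proof is correct, and the ``only if'' direction ($r_\ell(a)=b \Rightarrow \ell=\bis(a,b)$ via $\ell\subseteq\bis(a,b)$ and the fact that both are lines) is exactly the paper's argument. Where you diverge is the ``if'' direction. The paper does not verify $r_{\bis(a,b)}(a)=b$ directly: it merely exhibits \emph{some} non-isotropic line $\ell'=\{\lambda(a-b)+v:(a-b)\cdot v=0\}$ with $r_{\ell'}(a)=b$ (a one-line computation, choosing $\lambda$ so that $b=a-2\bigl(\tfrac{a\cdot(a-b)}{\|a-b\|}-\lambda\bigr)(a-b)$, which is possible since $\|a-b\|\neq 0$), and then invokes the already-proved forward direction to conclude $\ell'=\bis(a,b)$, hence $r_{\bis(a,b)}(a)=b$. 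You instead prove the ``if'' direction head-on via a rigidity argument: the locus $\{p:\|p-x\|=\|a-x\|\ \forall x\in\ell\}$ forces $p-a\in\langle u\rangle$ by non-degeneracy, and the resulting quadratic $t(t\|u\|+2(a-x_0)\cdot u)=0$ has exactly two roots, so $b$ and $r_\ell(a)$, both solutions distinct from $a$ (using $a\notin\ell$, which follows from $\|a-b\|\neq 0$), must coincide. Your route costs more computation but is self-contained and makes explicit where $\|u\|\neq 0$ and non-degeneracy enter; the paper's route is shorter because it recycles the forward implication as a uniqueness statement for the axis. Both are complete proofs.
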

\begin{proof}
	If $r_\ell(a) = b$ and $p \in \ell$, then $\|p - a\| = \|p - b\|$ since $r_\ell$ is an isometry.
	
	It remains to show that such an $r_\ell$ always exists.
	Let $u = a-b$.
	It is always possible to choose $\lambda$ so that 
	\[b = a - 2 \left (\frac{a \cdot (a-b)}{\|a-b\|} - \lambda \right ) (a - b),\]
	which shows that $r_\ell(a) = b$ for the line $\ell = \{\lambda (a-b) + v: (a-b) \cdot v = 0\}$.
\end{proof}

\begin{lemma} \label{Lemma 10}
The composition of a pair of reflections is a positively oriented isometry of $\|a-b\|$.
In particular, $r_\ell  \circ r_{\ell'}$ is the identity if $\ell = \ell'$, is a translation if $\ell$ is parallel to $\ell'$, and is otherwise a rotation around $\ell \cap \ell'$.
\end{lemma}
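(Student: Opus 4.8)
The plan is to establish the general statement first and then split into the three geometric configurations according to the relative position of $\ell$ and $\ell'$. First I would record that each reflection $r_\ell$ is an affine isometry: by definition $r_\ell(a) = \sigma_u(a) + 2\lambda_1 u$, whose linear part $\sigma_u$ is orthogonal with determinant $-1$, so that $\|r_\ell(a) - r_\ell(b)\| = \|\sigma_u(a-b)\| = \|a-b\|$. The composition $r_\ell \circ r_{\ell'}$ is then again an isometry, and its linear part is $\sigma_u \circ \sigma_{u'}$, of determinant $(-1)(-1) = 1$. Hence the composition is positively oriented, and by the classification of positively oriented isometries recalled above it is a rotation about some point, a pure translation, or the identity. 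It remains to determine which occurs in each case. Note that, since reflections are only defined over non-isotropic lines, we have $\|u\|, \|u'\| \neq 0$ throughout, so all the maps in play are well defined.

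For $\ell = \ell'$ I would verify directly that $r_\ell$ has order two, using $\sigma_u^2 = \mathrm{id}$ and $\sigma_u(u) = -u$:
\[(r_\ell \circ r_\ell)(a) = \sigma_u(\sigma_u(a) + 2\lambda_1 u) + 2\lambda_1 u = a - 2\lambda_1 u + 2\lambda_1 u = a,\]
so the composition is the identity. When $\ell$ and $\ell'$ are parallel but distinct, they are orthogonal to a common $u$ (after rescaling, as $\sigma_{\lambda u} = \sigma_u$), say $\ell = \{\lambda_1 u + v : u\cdot v = 0\}$ and $\ell' = \{\lambda_1' u + v : u \cdot v = 0\}$ with $\lambda_1 \neq \lambda_1'$. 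The analogous computation gives
\[(r_\ell \circ r_{\ell'})(a) = a + 2(\lambda_1 - \lambda_1')u,\]
a pure translation by the nonzero vector $2(\lambda_1 - \lambda_1')u$.

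In the remaining case $\ell$ and $\ell'$ are not parallel, so in the affine plane two lines of distinct directions meet in a unique point $p = \ell \cap \ell'$. I would move $p$ to the origin by conjugating with the translation $T_p : a \mapsto a + p$: conjugation sends a reflection to the reflection over the translated axis, so $T_p^{-1}(r_\ell \circ r_{\ell'}) T_p = r_{\ell - p} \circ r_{\ell' - p}$, where now both axes pass through the origin. With $\lambda_1 = \lambda_1' = 0$ the two reflections reduce to the linear maps $\sigma_u$ and $\sigma_{u'}$, so their composition is a positively oriented orthogonal map, hence a rotation about the origin; it is nontrivial because $\sigma_u \circ \sigma_{u'} = \mathrm{id}$ would force $\sigma_u = \sigma_{u'}$ and thus $u \parallel u'$, contradicting $\ell \not\parallel \ell'$. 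Conjugating back, $r_\ell \circ r_{\ell'}$ is a rotation about $p = \ell \cap \ell'$.

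The step I expect to demand the most care is this last case over a general field. One must confirm that the parallel/intersecting dichotomy behaves as over $\RR$ (two lines of different directions meet in exactly one point), that $\sigma_u \circ \sigma_{u'}$ is a nontrivial rotation precisely when the axes are not parallel, and — if one prefers to locate the center without the conjugation trick — that a nontrivial rotation fixes only its center, so that the center is forced to be the common fixed point $\ell \cap \ell'$. Isotropic directions do not interfere, since the hypothesis that reflections act only over non-isotropic lines keeps $u$ and $u'$ anisotropic and the reflections well defined.
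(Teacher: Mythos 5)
Your proof is correct, and it identifies the type of the composed isometry by a different mechanism than the paper. Both arguments begin identically: write $r_\ell(a)=\sigma_u(a)+2\lambda_1 u$, observe that the composition has linear part $\sigma_u\sigma_{u'}$ of determinant $1$, and conclude it is a positively oriented isometry. The paper then finishes with a single fixed-point argument: since there is a unique line reflecting a given point $a$ to a given $a'\neq a$, any fixed point of $r_\ell\circ r_{\ell'}$ with $\ell\neq\ell'$ must be fixed by both reflections and hence lie on $\ell\cap\ell'$; the trichotomy then follows from the classification of positively oriented isometries by their fixed-point sets. You instead compute each case explicitly --- the identity and the translation by $2(\lambda_1-\lambda_1')u$ drop out of the formulas, and the intersecting case is handled by conjugating the intersection point to the origin so that both reflections become the linear maps $\sigma_u,\sigma_{u'}$, whose nontrivial positively oriented product is by definition a rotation about the origin. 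Your route costs a little more computation (and the small verification, which you correctly flag, that conjugating a reflection by a translation yields the reflection over the translated axis), but it buys independence from the facts that translations are the only fixed-point-free positively oriented isometries and that a point reflected to a distinct point determines the mirror uniquely; it also makes the translation vector explicit. Either argument is complete over an arbitrary field of characteristic not $2$, since both only ever reflect over non-isotropic lines, where $\|u\|\neq 0$ keeps $\sigma_u$ well defined.
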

\begin{proof}

Let 
$r_\ell = \sigma_u(a) + 2 \lambda u$, and
$r_{\ell'} = \sigma_{u'}(a) + 2 \lambda' u'$ for some $u,u'$ depending on $\ell, \ell'$.
Then
\[r_\ell \circ r_\ell'(a) = \sigma_u \sigma_{u'}(a) + 2 \lambda' \sigma_u(u') + 2 \lambda u.\]
This is a positively oriented orthogonal transformation $\sigma_u \sigma_{u'}$ composed with a translation that does not depend on $a$.

Given distinct points $a$ and $a'$, there is a unique line $\ell$ such that $r_\ell(a) = a'$.
Consequently, if $r_\ell r_{\ell'}(a) = a$, then either $\ell = \ell'$ or $a$ is a fixed point of both $r_\ell$ and $r_{\ell'}$, and hence must be in their intersection.
The lemma follows because the only positively oriented isometries without fixed points are pure translations and the only positively oriented isometries with a single fixed point are pure rotations.
\end{proof}

\begin{lemma}
Given any quadruple of points $a,c,a',c'$ with $\|a-c\| = \|a'-c'\| \neq 0$, there is a unique positively oriented isometry $\tau$ such that $(\tau(a) , \tau(c) ) = (a',c')$.
\end{lemma}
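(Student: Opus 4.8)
The plan is to reduce the problem to a linear one and then exploit the classification of positively oriented isometries established above. Any positively oriented isometry may be written as $\tau(x) = \rho(x) + v$, where $\rho$ is a positively oriented orthogonal transformation and $v \in \F^2$. Under this parametrization the two conditions $\tau(a) = a'$ and $\tau(c) = c'$ are equivalent to $\rho(a-c) = a'-c'$ together with $v = a' - \rho(a)$. Writing $u = a-c$ and $u' = a'-c'$ (both non-isotropic of equal norm, since $\|a-c\| = \|a'-c'\| \neq 0$), the whole statement reduces to showing that there is a unique positively oriented orthogonal $\rho$ with $\rho(u) = u'$; the translation vector $v$ is then forced, so existence and uniqueness of $\tau$ follow from those of $\rho$.

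For existence I would construct $\rho$ as a composition of reflections through the origin, using the explicit formula for $\sigma_w$. A direct computation gives $\sigma_{u-u'}(u) = u'$ whenever $\|u-u'\| \neq 0$, and $\sigma_{u+u'}(u) = -u'$ whenever $\|u+u'\| \neq 0$. The subtlety over a general field is that $u-u'$ may itself be isotropic, so a single reflection need not be available. Here I would invoke the identity $\|u-u'\| + \|u+u'\| = 2\|u\| + 2\|u'\| = 4\|u\| \neq 0$, which guarantees that at least one of $u-u'$ and $u+u'$ is non-isotropic. If $\|u-u'\|\neq 0$, I compose $\sigma_{u-u'}$ with a reflection $\sigma_w$ satisfying $w \cdot u' = 0$ and $\|w\| \neq 0$ (such $w$ exists because $\langle u'\rangle^\perp$ is non-isotropic when $\|u'\| \neq 0$), so that $\sigma_w \circ \sigma_{u-u'}(u) = \sigma_w(u') = u'$. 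If instead $\|u+u'\|\neq 0$, I compose $\sigma_{u+u'}$ with $\sigma_{u'}$, using $\sigma_{u'}(-u') = u'$, to obtain $\sigma_{u'}\circ\sigma_{u+u'}(u) = u'$. In either case $\rho$ is a product of two reflections and hence, by Lemma \ref{Lemma 10}, a positively oriented orthogonal transformation carrying $u$ to $u'$.

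For uniqueness I would argue directly from the fixed-point classification. Suppose $\tau_1$ and $\tau_2$ both send $(a,c)$ to $(a',c')$. Then $\tau_2^{-1}\circ\tau_1$ is a positively oriented isometry fixing both $a$ and $c$, and since $\|a-c\| \neq 0$ forces $a \neq c$, it has two distinct fixed points. By Lemma \ref{Lemma 10} a nontrivial rotation has a single fixed point and a nontrivial translation has none, so $\tau_2^{-1}\circ\tau_1$ is the identity and $\tau_1 = \tau_2$.

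I expect the existence step, and specifically the case where $u-u'$ is isotropic, to be the only genuine obstacle. Over $\RR$ one could simply take the rotation carrying one vector to another of the same length, but over a field containing $\sqrt{-1}$ this fails and one must argue more carefully; the observation that $\|u-u'\| + \|u+u'\| = 4\|u\| \neq 0$, so that a non-isotropic reflection axis is always available, is exactly what resolves this difficulty.
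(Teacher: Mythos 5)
Your proof is correct. The paper disposes of this lemma in one line -- ``first translate $a$ to $a'$, then apply the unique rotation that takes $c$ to $c'$'' -- which silently assumes exactly the fact you prove in detail: that there is a unique positively oriented orthogonal map carrying one non-isotropic vector to another of the same nonzero norm. Your reduction (split $\tau$ into linear part $\rho$ plus a forced translation, so everything hinges on $\rho(u)=u'$) is the same skeleton, but your existence argument genuinely adds something the paper glosses over: over a field containing $\sqrt{-1}$ the ``obvious'' rotation is not available by appeal to angles, and the reflection $\sigma_{u-u'}$ can fail to exist when $u-u'$ is isotropic. The identity $\|u-u'\|+\|u+u'\|=4\|u\|\neq 0$, which guarantees a non-isotropic reflection axis in at least one of the two cases, is precisely the point at which the general-field argument needs care, and your two-reflection constructions (together with the non-isotropy of $\langle u'\rangle^{\perp}$ when $\|u'\|\neq 0$) handle it cleanly. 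The uniqueness argument via the fixed-point classification of Lemma \ref{Lemma 10} matches what the paper implicitly relies on. In short: same route, but your version is the one that actually compiles over an arbitrary field of characteristic not $2$.
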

\begin{proof}
	First translate $a$ to $a'$, then apply the unique rotation that takes $c$ to $c'$.
\end{proof}

We are now ready to prove Lemma \ref{lem:LSdZ}.

\begin{proof}[Proof of Lemma \ref{lem:LSdZ}]

Let $(b,d) \in S_{ac} \cap S_{a'c'}$.
Since $\bis(a,b)$ is defined only when $\|a-b\| \neq 0$, we can assume that either $a=b$ or $\|a-b\| \neq 0$, and likewise for the pairs $(a',b), (c,d),$ and $(c',d)$.

If $a \neq b$, then let $\ell_1 = \bis(a,b)$.
Otherwise, if $c \neq d$, then let $\ell_1 = \bis(c,d)$.
Otherwise, let $\ell_1$ be the line through the pair $(a,c) = (b,d)$.
Define $\ell_2$ similarly, using $(a',b')$ in place of $(a,b)$.
Note that we must have $\ell_1 \neq \ell_2$, since if they were equal we would have $(a,c) = (a',c')$.

Let $\tau$ be the unique rotation or translation that carries $(a,c)$ to $(a',c')$.
Since $(b,d) \in S_{ac} \cap S_{a'c'}$, we have by Lemma~\ref{Lemma 9} that $\tau$ is the composition of a reflection over $\ell_1$ followed by a reflection over $\ell_2$.
If $\tau$ is a translation, then $\ell_1$ must be parallel to $\ell_2$, so $b$ is in the line that contains $(a,a')$ and $d$ is in the line that contains $(c,c')$.
If $\tau$ is a rotation, then, by Lemma~\ref{Lemma 10}, $\ell_1 \cap \ell_2$ must be the fixed point $p$ of $\tau$.
If $p = a$, then $a = b = a'$.
Otherwise, $\|a-p\| = \|b-p\| = \|a'-p\| \neq 0$, and so $b$ is contained in the circle with center $p$ that contains $(a,a')$.
The very same reasoning applies for $d$, using $(c,c')$ in place of $(a,a')$.
\end{proof}

\subsection{Bounding $\en$}

Having established Lemma \ref{lem:LSdZ}, we are ready to prove Proposition \ref{Pts vars inc}.

\begin{proof}[Proof of Proposition~\ref{Pts vars inc}]
We set $|P|=m$ and $|V| = n$. We also denote ordered pairs of points $(b,d) \in P$ by $x$ and ordered pairs of points $(a,c)$ such that $S_{ac} \in V$ by $v$. For each $x \in P$ let
\[
i(x) = |\{ S_{v} \in V : x \in S_{v} \}|
\]
the number of varieties it is incident to. Applying H\"older's inequality we get
\[
I(P,V) = \sum_{x \in P} i(x) \leq m^{2/3} \left(  \sum_{x \in P} i(x)^3 \right)^{1/3}.
\]
We next use the fact that $i^3 = O(1 + i(i-1)(i-2))$ to obtain
\begin{align*}
I(P,V) 
& = O\left( m^{2/3} \left(m + \sum_{x \in P} i(x) (i(x)-1) (i(x)-2)  \right)^{1/3} \right) \\
& =  O\left( m + m^{2/3} \left(\sum_{x \in P} i(x) (i(x)-1) (i(x)-2)  \right)^{1/3} \right).
\end{align*}

We next expand the bracketed term.
\begin{align*}
\sum_{x \in P} i(x) (i(x)-1) (i(x)-2)  
& = \sum_{x \in P}  \sum_{\substack{\text{pairwise distinct} \\ v_1, v_2 , v_3}} 1_{x \in S_{v_1}} 1_{x \in S_{v_3}} 1_{x \in S_{v_3}}  \\
& = \sum_{x \in P}  \sum_{\substack{\text{pairwise distinct} \\ v_1, v_2 , v_3}} 1_{x \in S_{v_1} \cap S_{v_2} \cap S_{v_3}}  \\
& \leq \sum_{\substack{\text{pairwise distinct} \\ v_1, v_2 , v_3}}  \sum_{x \in \F^4} 1_{x \in S_{v_1} \cap S_{v_2} \cap S_{v_3}}  \\
& = \sum_{\substack{\text{pairwise distinct} \\ v_1, v_2 , v_3}}  |S_{v_1} \cap S_{v_2} \cap S_{v_3}|.
\end{align*}

In view of Lemma~\ref{lem:LSdZ} we observe that $(b,d) = x \in S_{v_1} \cap S_{v_2} \cap S_{v_3}$ if and only if $x$ lies in all three intersections $S_{v_i} \cap S_{v_j}$ with $1 \leq i < j \leq 3$. Therefore, by Lemma~\ref{lem:LSdZ}, if $x \in S_{v_1} \cap S_{v_2} \cap S_{v_3}$, then both $b$ and $d$ belong to the intersection of three circles or lines. We may solely focus on $b$ because knowing $b$ and that $x \in S_{v}$ uniquely determines (the common perpendicular bisector and hence) $d$. There are two separate cases to consider.

The first case is when all three lines or circles are the same. Then there are at most $M$ possibilities for $x=(b,d)$ because $b$ is incident to this line or circle (there is only one possibility for $d$). There are at most $2 n^2 M$ possibilities for the $v_i = (a_i,c_i)$ because once $v_1$ and $v_2$ have been selected, they determine the circles or lines on which $a_3$ and $c_3$ must be incident to. Hence there are at most $M$ possibilities for $a_3$. Since $|a_3c_3| = r$, $c_3$ belongs to the intersection of two circles and there are at most two possibilities for $c_3$. In total, the contribution to the bracketed term coming from this case is $O(M^2 n^2)$.

The second case is when at least two of the lines or circles are distinct. Then $b$ lies in the intersection of two lines or circles, so there are at most two possibilities for $b$ and hence for $x$. There are at most $n^3$ possibilities for the $v_i = (a_i,c_i)$. In total, the contribution to the bracketed term coming from this case is $O(n^3)$.

Putting everything together yields
\begin{align*}
I(P,V) 
& =  O\left( m + m^{2/3} \left( \sum_{\substack{\text{pairwise distinct} \\ v_1, v_2 , v_3}}  |S_{v_1} \cap S_{v_2} \cap S_{v_1}|  \right)^{1/3} \right) \\
& =  O\left( m + m^{2/3} \left(M^{2} n^{2} + n^{3}  \right)^{1/3} \right) \\
& =  O\left( m + M^{2/3} m^{2/3} n^{2/3} + m^{2/3} n \right).
\end{align*}
\end{proof}

Summing over the distances in $\F$ gives a bound on $\en$, as follows.

\begin{corollary} \label{Q bound}
Let $A \subseteq \F^2$. If, for each $r \in \F^\times$, $m_r = |\{ (a,b) \in A \times A : \| a - b \| = r\} |$, then
\[
\en = O\left( M^{2/3} |A|^{4/3} \left(\sum_{r \in \F^\times} m_r^2 \right)^{1/3}  + |A|^{1/3} \left(\sum_{r \in \F^\times} m_r^2 \right)^{2/3} \right).
\]
\end{corollary}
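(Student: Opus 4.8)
The plan is to sum the estimate of Proposition~\ref{Pts vars inc} over all nonzero distances. Taking $P=\Pi_r$ and $V=\vrts_r$, we have $|P|=m_r$, and $|V|\le m_r$ because every variety in $\vrts_r$ has the form $S_{ac}$ for one of the $m_r$ ordered pairs $(a,c)$ with $\|a-c\|=r$. Since $\en=\sum_{r\in\F^\times}I(\Pi_r,\vrts_r)$, Proposition~\ref{Pts vars inc} gives
\[
\en=O\Big(\sum_r m_r+M^{2/3}\sum_r m_r^{4/3}+\sum_r m_r^{5/3}\Big).
\]
Two inputs drive the rest. First, $\sum_r m_r\le|A|^2$, since the left-hand side counts ordered pairs at nonzero distance. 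Second, the geometric ingredient: for fixed $b$ the set $\{d:\|b-d\|=r\}$ is a circle, so it meets $A$ in at most $M$ points, and summing over $b$ yields the pointwise bound $m_r\le M|A|$.

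Next I would dispatch the first two sums into the first term $M^{2/3}|A|^{4/3}(\sum_r m_r^2)^{1/3}$ of the claimed bound. For the middle sum this is immediate from the power-mean inequality
\[
\sum_r m_r^{4/3}\le\Big(\sum_r m_r\Big)^{2/3}\Big(\sum_r m_r^2\Big)^{1/3}\le|A|^{4/3}\Big(\sum_r m_r^2\Big)^{1/3},
\]
after multiplying by $M^{2/3}$. The bare sum $\sum_r m_r\le|A|^2$ is of strictly lower order in the regime that matters (where $\sum_r m_r^2\gtrsim|A|^2$, which holds whenever a positive proportion of pairs have nonzero distance), and is likewise absorbed into the first term.

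The heart of the matter is the third sum, which must be shown to satisfy $\sum_r m_r^{5/3}=O(|A|^{1/3}(\sum_r m_r^2)^{2/3})$. The naive interpolation $\sum_r m_r^{5/3}\le(\sum_r m_r)^{1/3}(\sum_r m_r^2)^{2/3}$ combined with $\sum_r m_r\le|A|^2$ loses a full factor of $|A|^{1/3}$, so the improvement must be extracted from the circle bound $m_r\le M|A|$. I would split the distances by multiplicity: those with $m_r\le M^2$ obey $m_r^{5/3}\le M^{2/3}m_r^{4/3}$ and fold back into the first term exactly as above, while for the high-multiplicity distances I would discard the symmetric incidence bound in favour of the no-$K_{2,M}$ estimate $I(\Pi_r,\vrts_r)=O(M^{1/2}m_r^{3/2})$ from (\ref{eq:KSTbounds}), and then combine $m_r\le M|A|$ with $\sum_r m_r\le|A|^2$ through interpolation.

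I expect this high-multiplicity estimate to be the main obstacle, and the place where the precise exponent $1/3$ (rather than $2/3$) is forced. The circle bound by itself does not exclude a distribution of the $m_r$ consisting of many distances each of near-maximal multiplicity $M|A|$, and for such a distribution the summed incidence bounds alone overshoot the target. Closing the gap therefore requires using the $M$-points-per-line-or-circle hypothesis not merely pointwise but also to constrain how many distances can simultaneously carry large multiplicity, and balancing this against the factor $(\sum_r m_r^2)^{2/3}$ uniformly in $M$ is the delicate step; this is exactly where the bilinear structure of the incidence bound, rather than crude interpolation, must be exploited.
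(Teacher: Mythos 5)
Your calculation up to and including the double application of H\"older is precisely the paper's proof: apply Proposition~\ref{Pts vars inc} with $P=\Pi_r$ and $V=\vrts_r$ (the paper notes $|\Pi_r|=|\vrts_r|=m_r$; your weaker $|V|\le m_r$ suffices), sum over $r\in\F^\times$, write $m_r^{4/3}=m_r^{2/3}m_r^{2/3}$ and $m_r^{5/3}=m_r^{1/3}m_r^{4/3}$, apply H\"older twice, and finish with $\sum_r m_r\le |A|^2$. (Your handling of the linear term needs no ``regime that matters'' caveat: whenever $m_r\ge 1$ one has $m_r\le m_r^{4/3}$, so $\sum_r m_r$ folds unconditionally into the middle sum.) This argument yields
\[
\en=O\left(M^{2/3}|A|^{4/3}\left(\sum_{r\in\F^\times} m_r^2\right)^{1/3}+|A|^{2/3}\left(\sum_{r\in\F^\times} m_r^2\right)^{2/3}\right),
\]
with $|A|^{2/3}$, not $|A|^{1/3}$, in the second term --- and that is all the paper itself proves. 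The exponent $1/3$ in the statement of Corollary~\ref{Q bound} is a typo for $2/3$: the paper's proof terminates exactly at the H\"older bound $\left(\sum_r m_r\right)^{1/3}\left(\sum_r m_r^2\right)^{2/3}\le |A|^{2/3}\left(\sum_r m_r^2\right)^{2/3}$, and the sole application of the corollary, in the proof of Corollary~\ref{Dichotomy}, substitutes the $|A|^{2/3}$ version (the term $m^{2/3}\en^{8/45}m^{16/9}=m^{22/9}\en^{8/45}$, leading to $\en=O(m^{110/37})$ as used in Theorem~\ref{Pinned dist ff}).

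Consequently the final two paragraphs of your proposal --- the multiplicity splitting, the $K_{2,M}$-based bound from (\ref{eq:KSTbounds}), and the pointwise bound $m_r\le M|A|$ --- are chasing a target the paper never establishes, and, as you yourself concede, the chase does not close. You correctly identified that no interpolation of $\sum_r m_r^{5/3}$ through $\sum_r m_r\le|A|^2$ can do better than $|A|^{2/3}$; the right conclusion to draw from that observation is that the stated exponent is at fault, not that a deeper incidence argument is required (whether the $|A|^{1/3}$ version is even true is unclear, but nothing downstream needs it). Read with $|A|^{2/3}$, your proof is complete and coincides with the paper's.
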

\begin{proof}
We saw above that
\[
\en = \sum_{r \in \F^\times} I(\Pi_r, \vrts_r).
\]

Applying  Proposition~\ref{Pts vars inc} with $P = \Pi_r$ and $V = \vrts_r$, and noting that $|\Pi_r| = |\vrts_r| = m_r$, gives
\[
\en = O\left( \sum_{r \in \F^\times} M^{2/3} m_r^{4/3} + m_r^{5/3} \right) = O\left( \sum_{r \in \F^\times} M^{2/3} m_r^{2/3} m_r^{2/3} + m_r^{1/3} m_r^{4/3} \right).
\]
Applying H\"older's inequality twice gives 
\[
\en = O\left( M^{2/3} \left(\sum_{r \in \F^\times} m_r \right)^{2/3} \left(\sum_{r \in \F^\times} m_r^2 \right)^{1/3}  + \left(\sum_{r \in \F^\times} m_r \right)^{1/3} \left(\sum_{r \in \F^\times} m_r^2 \right)^{2/3} \right).
\]
The final observation is $\sum_{r \in \F^\times}  m_r \leq |A|^2$ (because each ordered pair of vertices contributes at most 1 to the sum) and the claim follows.
\end{proof}

\subsection{Distance quadruples and isosceles triangles}
The next step in the proof of Theorem~\ref{Pinned dist ff} is to obtain an upper bound for the sum $\sum_r m_r^2$. This is step is fairly standard. It relies on a point-line incidence bound of Stevens and de Zeeuw. We begin by bounding this sum in terms of
\[
\IT = |\{(a,b,c) \in A \times A \times A : \|a-b\| = \|a-c\| \neq 0 \}|,
\]
which equals the number of ordered isosceles triangles with all three vertices in $A$.
From Lemma \ref{lem:no011triangle}, we know that, if $(a,b,c) \in \IT$, then $\|b-c\| \neq 0$.

\begin{lemma} \label{IT to L2}
	Let $A \subseteq \F^2$ and $\IT$ the number of ordered isosceles triangles with all three vertices in $A$. If $m_r$ is the number of ordered pairs of elements of $A$ a distance $r$ apart, then
	\[
	\sum_{r \in \F^\times} m_r^2 \leq |A| \, \IT.
	\]
\end{lemma}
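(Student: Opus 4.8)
The plan is to reduce the inequality to a single application of the Cauchy--Schwarz inequality by introducing, for each point $a \in A$ and each $r \in \F^\times$, the quantity
\[
n_{a,r} = |\{b \in A : \|a-b\| = r\}|,
\]
the number of points of $A$ at distance $r$ from $a$. Both quantities in the lemma can be written cleanly in terms of the $n_{a,r}$. First, since every ordered pair at distance $r$ has a well-defined first coordinate, $m_r = \sum_{a \in A} n_{a,r}$. Second, an ordered isosceles triple $(a,b,c)$ with apex $a$ and common distance $\|a-b\| = \|a-c\| = r \neq 0$ is exactly a choice of an ordered pair $(b,c)$ of points of $A$ each at distance $r$ from $a$; hence the number of such triples with apex $a$ and distance $r$ is $n_{a,r}^2$, and summing yields $\IT = \sum_{a \in A} \sum_{r \in \F^\times} n_{a,r}^2$.

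With these two identities in hand, the key step is to estimate $m_r^2$ for a fixed $r$. Viewing $m_r = \sum_{a \in A} n_{a,r}$ as the inner product of the vector $(n_{a,r})_{a \in A}$ with the all-ones vector of length $|A|$, the Cauchy--Schwarz inequality gives
\[
m_r^2 = \Big( \sum_{a \in A} n_{a,r} \Big)^2 \leq |A| \sum_{a \in A} n_{a,r}^2 .
\]
Summing over all $r \in \F^\times$ and interchanging the order of summation then produces
\[
\sum_{r \in \F^\times} m_r^2 \leq |A| \sum_{r \in \F^\times} \sum_{a \in A} n_{a,r}^2 = |A| \sum_{a \in A} \sum_{r \in \F^\times} n_{a,r}^2 = |A| \, \IT,
\]
which is precisely the claimed bound.

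There is no substantial obstacle in this argument; the only point deserving a moment's care is the bookkeeping around the condition $r \neq 0$. Because $m_r$ is defined only for $r \in \F^\times$ and $\IT$ counts only triples with $\|a-b\| = \|a-c\| \neq 0$, both sides restrict to nonzero distances, so the two identities above run over the same index set $\F^\times$ and the Cauchy--Schwarz step applies verbatim. One should also note that the definition of $\IT$ permits the degenerate triples with $b = c$, which is exactly what is needed for the clean identity $\IT = \sum_{a}\sum_{r} n_{a,r}^2$: the diagonal terms $b=c$ account for $n_{a,r}$ of the $n_{a,r}^2$ pairs. Consequently the displayed chain of equalities together with the single Cauchy--Schwarz inequality suffices to prove the lemma.
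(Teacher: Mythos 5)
Your proof is correct and is essentially identical to the paper's: the paper writes $n_{a,r}$ as $\sum_{b\in A} f_{r,a}(b)$ for an indicator function $f_{r,a}$ and applies the same Cauchy--Schwarz step in the variable $a$, followed by the same interchange of summation to recognize $\IT$. Your added remark about the degenerate $b=c$ triples being included in $\IT$ is a correct and worthwhile clarification, but the argument is the same.
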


\begin{proof}
	For each $a \in A$ and $r \in \F^\times$ let $f_{r,a}: A \to \{0,1\}$ be the function defined by $f_{r,a}(b) = 1$ if $\| a - b \|=r$ and $f_{r,a}(b) =0$ otherwise. The following string of inequalities proves the result.
	\[
	\sum_{r \in \F^\times} m_r^2 =  \sum_{r \in \F^\times} \left( \sum_{a \in A} \sum_{b \in A} f_{r,a}(b) \right)^2 \leq |A| \sum_{r \in \F^\times} \sum_{a \in A} \left(  \sum_{b \in A} f_{r,a}(b) \right)^2 = |A| \, \IT.
	\]
\end{proof}

To bound $\IT$ we apply a slight modification of the aforementioned point-line incidence bound of Stevens and de Zeeuw.

\begin{theorem}[Stevens and de Zeeuw] \label{SdZ}
	Let $P \subset \F^2$ be a set and $\lines$ be a collection of lines. Suppose that $|P| \leq p^{8/5}$. The number of incidences between points in $P$ and lines in $\lines$ is
	\[
	I(P, \lines) = O(|P|^{11/15} |\lines|^{11/15} + |P| +|\lines|).
	\]
\end{theorem}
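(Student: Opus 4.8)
The statement is the point--line incidence bound of Stevens and de Zeeuw, so the plan is to follow their argument, whose engine is Rudnev's point--plane incidence theorem in $\F^3$: for a set of points $\mathcal{R}$ and a set of planes $\Pi$ in $\F_p^3$ with $|\mathcal{R}| \leq |\Pi|$ and $|\mathcal{R}| = O(p^2)$, one has $I(\mathcal{R},\Pi) = O(|\mathcal{R}|^{1/2}|\Pi| + k|\Pi|)$, where $k$ is the largest number of collinear points of $\mathcal{R}$. Writing $m = |P|$ and $n = |\lines|$, the idea is to lift the planar problem into three dimensions, bound a higher--order incidence quantity (a count $T_3$ of collinear triples of $P$, equivalently a third moment of the line richnesses) using Rudnev, and then pass back from this quantity to $I(P,\lines)$ by a dyadic decomposition over the richness of the lines.

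The first technical step is the lift. Collinearity of three points $a,b,c \in \F^2$ is the vanishing of the $3\times 3$ determinant with rows $(a,1),(b,1),(c,1)$, which is bilinear in the coordinates; reading the pair $(a,b)$ as the data of a plane and $(c,1)\in\F^3$ as a point, a collinear triple becomes a point--plane incidence, and Rudnev then bounds $T_3$. The point to be careful about is that the naive form of this reduction produces planes through a common point, which is degenerate and yields nothing beyond the two--dimensional count; the genuine gain requires arranging the reduction so that the planes are in honest three--dimensional position. The hypothesis $|P| \le p^{8/5}$ in particular keeps the lifted point set within the range $O(p^2)$ demanded by Rudnev's theorem.

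To convert a bound on $T_3$ into a bound on $I(P,\lines)$, I would split the lines dyadically according to their richness $t = |P \cap \ell|$. The number of $t$--rich lines is controlled from two sides: by the elementary Cauchy--Schwarz (equivalently $K_{2,2}$--free) estimate, which holds because two points determine at most one line and which bounds the number of such lines by $O(m^2/t^2)$; and by the triple count, which bounds it by $O(T_3/t^3)$. Summing $t$ times the better of the two estimates over the dyadic scales, and first separating out the few exceptionally rich lines so that the collinearity parameter $k$ may be taken small, should yield the claimed bound $O(m^{11/15}n^{11/15} + m + n)$ after optimisation.

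The main obstacle is precisely this interplay: a careless count of collinear triples reproduces only the Cauchy--Schwarz exponent, giving $m^{3/2}$ in the balanced case $m=n$ rather than the stronger $m^{22/15}$, so the improvement to $11/15$ is genuinely subtle and depends on feeding Rudnev's theorem a non-degenerate configuration and on balancing the two rich--line estimates correctly. A secondary, more routine difficulty is the bookkeeping of the characteristic: one must check that every application of Rudnev's theorem stays within its range $O(p^2)$ and that the resulting $p$--dependent error terms are dominated throughout the relevant regime. Pinning down the precise numerical threshold at which the combinatorial main term overtakes the characteristic--dependent term is exactly what forces the hypothesis $|P| \le p^{8/5}$, and is the content of the ``slight modification'' of the original statement.
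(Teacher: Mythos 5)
Your proposal sets out to re-prove the Stevens--de Zeeuw incidence theorem itself from Rudnev's point--plane bound, whereas the paper does something much more modest: it quotes the original theorem as a black box (valid in the balanced range $m^{7/8}\le n\le m^{8/7}$ under a condition on $p$) and observes that outside that range the elementary $K_{2,2}$-free bounds $I(P,\lines)=O(m^{1/2}n+m)$ and $I(P,\lines)=O(n^{1/2}m+n)$ already imply the claim, since $m^{1/2}n\le (mn)^{11/15}$ exactly when $n\le m^{7/8}$ and dually. The entire content of the ``slight modification'' is this case analysis on $n$ versus $m$ together with checking that $|P|\le p^{8/5}$ puts you inside the characteristic condition of the original theorem; your write-up never performs either step.

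More seriously, the from-scratch argument you sketch has gaps at exactly its load-bearing points. First, the lift to point--plane incidences is not actually constructed: for fixed $(a,b)$ the vanishing of your $3\times 3$ determinant in $c$ is just the line through $a$ and $b$ in the plane, and you acknowledge that the naive lift is degenerate without saying how to fix it. The genuine Stevens--de Zeeuw argument does not bound collinear triples of $P$ at all; it uses two rich lines to move a large part of $P$ into a Cartesian product $A\times B$ by an affine change of coordinates, and then feeds a collision count of the form $|\{(a,b,\ell,\ell'): c_\ell a+d_\ell=c_{\ell'}b+d_{\ell'}\}|$ (an energy between points and lines) into Rudnev's theorem; that reduction is the heart of the proof and is absent here. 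Second, the proposed conversion from a collinear-triple count $T_3$ to $I(P,\lines)$ by dyadic summation over richness cannot give the exponent $11/15$: optimizing $\sum_t t\min\{n,m^2/t^2,T_3/t^3\}$ yields roughly $T_3^{2/3}n^{1/3}$ from the rich scales, and even under the optimistic bound $T_3=O(m^2\log m)$ this is $m^{5/3}$ at $n=m$, which is weaker than the trivial $m^{3/2}$, let alone the target $m^{22/15}$; the low-richness lines are simply not controlled by a triple count. I would replace the whole attempt with the paper's short deduction: cite the original theorem for $m^{7/8}\le n\le m^{8/7}$, verify the characteristic condition there from $|P|\le p^{8/5}$, and use the Cauchy--Schwarz bounds in the two unbalanced ranges.
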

\begin{proof}
	We let $|P|=m$ and $|\lines|=n$. Stevens and de Zeeuw proved $I(p, \lines) = O((mn)^{11/15})$ under the restrictions $m^{-2} n^{13} \leq p^{15}$ and $m^{7/8} \leq n \leq m^{8/7}$. Their result verifies the claimed bound when $m^{7/8} \leq n \leq m^{8/7}$ and $m \leq p^{8/5}$. In all other cases the unconditional bounds
	\[
	I(P, \lines) = O(m^{1/2} n + m ) \text{ or } I(P, \lines) = O(m^{1/2} n + m ) 
	\] 
	imply the theorem.
\end{proof}

\begin{lemma} \label{IT}
	Let $A \subseteq \F^2$, $\en$ be the bisector energy of $A$, and $\IT$ be the number of ordered isosceles triangles with all three vertices in $A$. If $|A| \leq p^{8/5}$, then either $	Q \leq \log^{15/4}(|A|) \, |A|^{5/4}$ or
	\[
	\IT = O(|A|^{5/3} \en^{4/15}).
	\]
\end{lemma}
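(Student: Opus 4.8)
The plan is to bound $\IT$, the number of ordered isosceles triangles, by reducing the problem to a point-line incidence count and then invoking the Stevens--de Zeeuw bound of Theorem~\ref{SdZ}. An ordered isosceles triangle $(a,b,c)$ with $\|a-b\| = \|a-c\| \neq 0$ records exactly that $a$ lies on the perpendicular bisector $\bis(b,c)$. So I would set up an incidence problem between the points of $A$ (playing the role of the apex $a$) and the collection of bisector lines $\{\bis(b,c) : b,c \in A, \|b-c\| \neq 0\}$. The number of incidences between $A$ and this line set is, up to the diagonal contribution, exactly $\IT$. The catch is that many pairs $(b,c)$ may share a bisector, so I must count the bisectors \emph{with multiplicity}: if a line $\ell$ arises as the bisector of $k$ distinct pairs, it contributes $k$ to the line multiset. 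The bisector energy $\en$ is precisely what controls these multiplicities, since $\en$ counts quadruples $(b,c,b',c')$ with $\bis(b,c) = \bis(b',c')$.

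The key computation is to relate the number of \emph{distinct} bisector lines and their multiplicities to $\en$. If $\ell_1, \ell_2, \dots$ are the distinct bisectors and $k_1, k_2, \dots$ their multiplicities, then $\sum_j k_j = |\{(b,c): \|b-c\|\neq 0\}| \leq |A|^2$ and $\sum_j k_j^2 = \en$ (give or take lower-order terms from the degenerate quadruples). I would dyadically decompose the bisectors by multiplicity: let $\lines_t$ be the set of distinct bisectors with multiplicity in $[t, 2t)$, so that $|\lines_t| \leq \en / t^2$. Each incidence of $A$ with a line in $\lines_t$ accounts for roughly $t$ ordered isosceles triangles, so the contribution to $\IT$ from this scale is $O(t \cdot I(A, \lines_t))$. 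Now I apply Theorem~\ref{SdZ} to each $I(A,\lines_t)$ with $P = A$ and $\lines = \lines_t$, using $|A| \leq p^{8/5}$ to satisfy the hypothesis, obtaining $O(|A|^{11/15}|\lines_t|^{11/15} + |A| + |\lines_t|)$.

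Summing the main term over dyadic scales gives a contribution of order $\sum_t t\,|A|^{11/15}(\en/t^2)^{11/15} = |A|^{11/15}\en^{11/15}\sum_t t^{1 - 22/15}$, and since $1 - 22/15 = -7/15 < 0$ the geometric sum is dominated by the smallest scale $t = 1$; this yields $O(|A|^{11/15}\en^{11/15})$, which does not immediately match the target. The exponent $4/15$ on $\en$ in the statement strongly suggests the summation should instead be balanced against the other incidence terms and against the total count $\sum_t t|\lines_t| \leq |A|^2$, so the real work is in tracking which scales dominate. I expect the main obstacle to be exactly this bookkeeping: one must split the dyadic sum at a threshold multiplicity $t_0$, bound small-multiplicity scales by the trivial count (the $|A|$ and $|\lines|$ terms and the constraint $\sum k_j \leq |A|^2$) and large-multiplicity scales by the Szemer\'edi--Trotter-type main term, then optimize over $t_0$. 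The hypothesis alternative $Q \leq \log^{15/4}(|A|)\,|A|^{5/4}$ presumably emerges as the regime where the bisector energy is too small for the incidence bound to be the bottleneck, so the dichotomy in the lemma reflects precisely the crossover between these two estimates. The delicate point is ensuring the degenerate pairs (those with $\|b-c\| = 0$, excluded by Lemma~\ref{lem:no011triangle} from contributing to $\IT$) and the diagonal terms do not corrupt the clean relation $\sum_j k_j^2 = \en$, which I would handle by the isotropy lemmas already established.
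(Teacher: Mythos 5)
Your setup is exactly the paper's: write $\IT=\sum_\ell i(\ell)b(\ell)$ over distinct bisectors $\ell$ with multiplicity $b(\ell)$, decompose dyadically in the multiplicity, and feed Theorem~\ref{SdZ} the two constraints $\sum_\ell b(\ell)\le |A|^2$ and $\sum_\ell b(\ell)^2=\en$. But the quantitative content of the lemma --- the exponents $5/3$ and $4/15$ and the threshold $\log^{15/4}(|A|)\,|A|^{5/4}$ --- lives entirely in the dyadic bookkeeping that you explicitly leave undone, and the plan you sketch for completing it would not work. You propose to handle the small-multiplicity scales ``by the trivial count (the $|A|$ and $|\lines|$ terms).'' That fails: at a scale $t$ where $|\lines_t|$ is as large as $|A|^2/t$, the Stevens--de Zeeuw main term $|A|^{11/15}|\lines_t|^{11/15}$ still dominates $|A|+|\lines_t|$, so it cannot be discarded there; and cruder incidence bounds (K\H{o}v\'ari--S\'os--Tur\'an, or $i(\ell)\le |A|$) give at best $t\cdot |A|^{1/2}|\lines_t|\approx |A|^{5/2}$ per scale, which is far larger than the target.

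What actually closes the argument is to keep the Stevens--de Zeeuw main term at \emph{every} scale and only switch which bound on $|\lines_t|$ is substituted into it: $|\lines_t|\le |A|^2/t$ for $t\le \en/|A|^2$ and $|\lines_t|\le \en/t^2$ for $t>\en/|A|^2$. The resulting main-term contributions, $|A|^{33/15}t^{4/15}$ (increasing in $t$) and $|A|^{11/15}\en^{11/15}t^{-7/15}$ (decreasing in $t$), both peak at the crossover $t_0=\en/|A|^2$, where each evaluates to $|A|^{5/3}\en^{4/15}$. (Your worry that the $\en/t^2$ bound alone yields only $|A|^{11/15}\en^{11/15}$ is unfounded when $\en\le |A|^2$, since then $|A|^{11/15}\en^{11/15}\le |A|^{5/3}\en^{4/15}$; the split is needed precisely when $\en>|A|^2$.) The lower-order terms contribute $O(|A|^2\log|A|)$ over the $O(\log|A|)$ scales, and the alternative $\en\le \log^{15/4}(|A|)\,|A|^{5/4}$ in the statement is exactly the condition under which this $|A|^2\log|A|$ term cannot be absorbed into $|A|^{5/3}\en^{4/15}$ --- a concrete crossover, not merely ``the regime where the energy is too small.'' Without carrying out this computation the lemma is not proved.
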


\begin{proof}
	We denote $|A|$ by $m$. We use the following identity
	\[
	\IT = \sum i(\ell) b(\ell),
	\]
	where $b(\ell)$ is the number of ordered pairs of elements of $A$ whose perpendicular bisector is $\ell$, $i(\ell)$ is the number of points in $A$ incident to $\ell$, and the sum is over all nonisotropic lines $\ell \subset \F^2$ with $b(\ell) \geq 1$. 
	The identity is true because $\|a-b\| = \|a-c\|$ is equivalent to $a$ being incident to the perpendicular bisector of $(b,c)$. We will break up the above sum in two parts, depending on whether $b(\ell) \leq \en / m^2 $, and apply dyadic decomposition to each part. 
	
	For $k \geq 1$, set $\lines_k$ be the collection of nonisotropic lines $\ell \subset \F^2$ that satisfy $k \leq b(\ell) < 2k$. We decompose the sum in dyadic intervals. To simplify the notation we call integers of the form $2^j$ for some $j\geq 0$ dyadic (note the smallest dyadic integer is 1). 
	\[
	\IT = \sum_{\ell  : i(\ell) >1} i(\ell) b(\ell) \leq \sum_{\text{dyadic } k \leq m} 2k \sum_{\ell \in \lines_k} i(\ell) =  \sum_{\text{dyadic } k \leq m} 2k \, I(A, \lines_k).
	\]
	We wish to apply the theorem of Stevens and de Zeeuw and so we need an upper bound on $|\lines_k|$.
	Note that $m^2$ is equal to $\sum b(\ell)$ minus the number of pairs of points on isotropic lines, and in particular $m^2 \geq \sum b(\ell)$.
	Hence,
	\[m^2 \geq \sum b(\ell) \geq k |\lines_k|.\]
	Also note that
	\[\en = \sum b(\ell)^2 \geq k^2 |\lines_k|.\]
	Combining these bounds,
	\[
	|\lines_k| \leq \min\left\{ \frac{m^2}{k} , \frac{\en}{k^2} \right\}.
	\]
	We apply the former upper bound to dyadic integers $1 \leq k \leq \tfrac{\en}{m^2}$ and the latter to $ \tfrac{\en}{m^2} \leq k \leq m$.
	\begin{align*}
	\IT 
	& \leq  \sum_{\text{dyadic } k \leq \en/m^2}  2k \, I(A, \lines_k) + \sum_{\text{dyadic } k \in (\en/m^2 , m]} 2k \, I(A, \lines_k) \\
	& = O\left( \sum_{\text{dyadic } k \leq \en/m^2} k( m^{11/15} |\lines_k |^{11/15} + m + |\lines_k |) + \sum_{\text{dyadic } k \in (\en/m^2, m]} k ( m^{11/15} |\lines_k |^{11/15} + m + |\lines_k|) \right) \\
	& = O\left( \sum_{\text{dyadic } k \leq \en/m^2} (m^{33/15} k^{4/15} + km + m^{2}) + \sum_{\text{dyadic } k \in (\en/m^2 , m]} (m^{11/15} \en^{11/15} k^{-7/15}+ km + \en k^{-1}) \right) \\
	& = O\left( m^2 \, \log(m) + m^{33/15} (\en /m^{2}) ^{4/15} + m^{11/15} \en^{11/15} (\en/m^2)^{-7/15} \right) \\
	& = O(m^2 \, \log(m) + m^{5/3} \en^{4/15}).
	\end{align*}
	If $\en > \log^{15/4}(m) \, m^{5/4}$, then $m^2 \, \log(m) < m^{5/3} \en^{4/15}$ and the lemma follows.
\end{proof}

\subsection{Finishing up}
Applying the trivial, yet sometimes sharp, bound $\en \leq |A|^3$ to Lemmata~\ref{IT to L2}~\&~\ref{IT} leads to the Stevens and de Zeeuw bound on $\sum m_r^2$. We will use Proposition~\ref{Pts vars inc} to characterize sets with nearly maximum bisector energy. This characterization is crucial for the proof of Theorem~\ref{Pinned dist ff}. The qualitative statement is that a set with nearly maximum bisector energy must have many points incident to a single circle or line. We state it in terms of a dichotomy.
\begin{corollary} \label{Dichotomy}
	Let $A \subseteq \F^2$, $\en$ be the bisector energy of $A$, and $M$ be the maximum number of points of $A$ incident to any circle or line. Either $\en = O(|A|^{3-1/37})$ or $M \geq |A|^{27/37}$.
\end{corollary}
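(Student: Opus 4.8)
The plan is to prove the contrapositive: assuming $M < |A|^{27/37}$, I will deduce $\en = O(|A|^{3-1/37})$. Write $N = |A|$. The whole argument is a chain of the three estimates already in hand---Lemma~\ref{IT}, Lemma~\ref{IT to L2}, and Corollary~\ref{Q bound}---followed by the resolution of a self-referential inequality in which $\en$ appears on both sides. The calibration of the exponents $27/37$ and $1/37$ will make the final arithmetic come out to an exact equality.

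First I would dispose of the easy regime supplied by Lemma~\ref{IT} (applied under the standing hypothesis $|A| \le p^{8/5}$, which is vacuous over $\RR$). That lemma gives a dichotomy: either $\en \le \log^{15/4}(N)\,N^{5/4}$, or $\IT = O(N^{5/3}\en^{4/15})$. In the first case we are already finished, since $5/4 < 3 - 1/37$ and the logarithmic factor is absorbed by any positive power of $N$. So I assume the bound on $\IT$ and feed it into Lemma~\ref{IT to L2}, obtaining
\[
\sum_{r \in \F^\times} m_r^2 \le N\,\IT = O\!\left(N^{8/3}\en^{4/15}\right).
\]

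Next I would substitute this into Corollary~\ref{Q bound}. Computing the two terms ($N^{4/3}T^{1/3} = N^{20/9}\en^{4/45}$ and $N^{1/3}T^{2/3} = N^{19/9}\en^{8/45}$ for $T = \sum_r m_r^2$) yields
\[
\en = O\!\left( M^{2/3} N^{20/9}\,\en^{4/45} + N^{19/9}\,\en^{8/45} \right).
\]
Since $\en$ occurs on the right, I split according to which summand dominates (if $\en \le C(A+B)$ then $\en = O(A)$ or $\en = O(B)$) and solve for $\en$ in each case. If the second term dominates, rearranging $\en^{37/45} = O(N^{19/9})$ gives $\en = O(N^{95/37})$; as $95/37 < 110/37 = 3 - 1/37$, this is acceptable for any $M$. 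If the first term dominates, rearranging $\en^{41/45} = O(M^{2/3}N^{20/9})$ gives $\en = O(M^{30/41}N^{100/41})$.

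Finally I would insert $M < N^{27/37}$ into this last bound. The exponent of $N$ becomes $\tfrac{27}{37}\cdot\tfrac{30}{41} + \tfrac{100}{41} = \tfrac{810+3700}{1517} = \tfrac{4510}{1517} = \tfrac{110}{37} = 3 - \tfrac{1}{37}$, so $\en = O(N^{3-1/37})$ in every case, completing the contrapositive. I expect the one delicate point to be the bootstrapping step: because the substitution reproduces $\en$ with a fractional exponent on the right, one must separate into the two dominant-term cases \emph{before} solving, rather than attempting to invert the combined inequality directly; everything else is routine exponent arithmetic.
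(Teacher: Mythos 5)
Your proposal is correct and follows essentially the same route as the paper: combine Corollary~\ref{Q bound} with Lemmata~\ref{IT to L2} and~\ref{IT}, dispose of the low-energy branch of Lemma~\ref{IT}, and resolve the resulting self-referential inequality term by term to get $\en = O(M^{30/41}N^{100/41} + N^{3-1/37})$. The only (harmless) divergence is in the second term, where you correctly obtain $N^{19/9}\en^{8/45}$ from the stated form of Corollary~\ref{Q bound} while the paper writes $N^{22/9}\en^{8/45}$; both yield an exponent at most $110/37 = 3 - 1/37$, so the conclusion is unaffected.
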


\begin{proof}
	Denoting $|A|$ by $m$, and combining Corollary~\ref{Q bound} and Lemmata~\ref{IT to L2}~\&~\ref{IT} (in Lemma~\ref{IT} we may assume that $\en > \log^{14/5}(m) m^{5/4}$) gives
	\[
	\en = O(M^{2/3} m^{4/3} \en^{4/45} m^{8/9} + m^{2/3} \en^{8/45} m^{16/9}) = O(M^{2/3} m^{20/9} \en^{4/45} + m^{22/9} \en^{8/45}).
	\]
	This implies 
	\[
	\en = O(M^{30/41} m^{100/41} + m^{110/37}).
	\]
	If $M \leq m^{27/37}$, then second term dominates, and the theorem follows.
\end{proof}

We are now in position to prove Theorem~\ref{Pinned dist ff}.

\begin{proof}[Proof of Twheorem~\ref{Pinned dist ff}]
	
Recall that $M$ is the maximum number of points of $A$ incident to any circle or line. Using an element of $A$ on a non-isotropic circle or line incident to $M$ points of $A$ gives at least $(M-1)/2$ pinned distances. If the circle or line incident to $M$ points is isotropic, we use as a pin the point of $A$ not on it and obtain at least $M$ pinned distances. In both cases, if the second conclusion of Corollary~\ref{Dichotomy} applies, then we have at least a constant multiple of $N^{27/37} \geq N^{20/37}$ pinned distances and we are done. We may therefore assume $\en = O(N^{110/37})$. 

Cauchy--Schwarz together with an averaging argument yields there exist at least nearly $N^3/ \IT$ pinned distances. Recall that $f_{r,a}(b)$ is the indicator function for $\|a-b\| = r$. We use $\delta_{a}$ to denote the number of nonzero distances $r$ at which there is a point $b \in A$ such that $\|a-b\| = r$. We use $\Delta_{\text{pin}}(A) = \max_a \delta_a$ to denote the pinned distance number. Since there are at most $2$ isotropic lines through each point, for each $a \in A$ there are at most $2M$ points $b$ such that $\|a - b\| = 0$.
\[ \Delta_{\text{pin}}(A) \IT \geq \sum_{a \in A} |\delta_{a}| \sum_{r \in \F^\times} \left (\sum_{b \in A} f_{r,a}(b) \right )^2 \geq \sum_{a \in A}  \sum_{r \in \F^\times} \sum_{b \in A} f_{r,a}(b) \geq N(N - 2M -1)^2 = \Omega(N^3).\]

Lemma~\ref{IT} implies there are $\Omega(N^{4/3} / \en^{4/15})$ pinned distances. The hypothesis $\en = O(N^{110/37})$ implies this number is $\Omega(N^{20/37})$.
\end{proof}

\section[Proof of Theorem 2 and sketch of proof of Theorem 3]{Proof of Theorem \ref{Real bis en} and sketch of proof of Theorem \ref{thm:distinctBisectors}}

We will need the following slight generalization of Theorem \ref{Real bis en}.

\begin{theorem}\label{thm:refinedRealBisEnergy}
	There exists an absolute positive constant $C>0$ with the following property.
	Let $A \subset \RR^2$ be a finite set of size $|A| = N$, and let $R_M \subset A^2$ be the set of pairs of points of $A$ that are not contained in any line or circle that contains more than $M$ points of $A$.
	If
	\[\en_M = |\{ (a,b,c,d) \in A^4 : (a,b) \in R_M, \, \bis(a,b) = \bis(c,d) \}|, \]
	then
	\[\en_M \leq C(MN^2 + \log^{1/2}(N) \, N^{2+1/2}). \]
\end{theorem}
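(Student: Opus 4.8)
The plan is to follow the same incidence-geometry strategy used for Proposition~\ref{Pts vars inc}, but to run the argument over $\RR$ and to sum the contributions from all distance classes at once, rather than handling a single fixed distance $r$. The essential observation from Lemma~\ref{lem:LSdZ} still applies: the varieties $S_{ac}$ carry a forbidden-configuration structure, and a triple of distinct varieties can share at most a bounded number of common points unless all three of the associated circles/lines coincide. So I would again split the count of incident quadruples into a ``degenerate'' term, where the three circles or lines from Lemma~\ref{lem:LSdZ} all agree and the shared point is forced to lie on one common curve, and a ``generic'' term, where at least two of the curves differ and there are only $O(1)$ choices for the shared point. The restriction to pairs in $R_M$ is precisely what controls the degenerate term: if the common curve contains more than $M$ points of $A$, the pair is excluded by hypothesis, so the degenerate contribution is governed by $M$.

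First I would set up the incidence problem exactly as before: for each relevant distance $r$, let $P = \Pi_r \cap R_M$ be the admissible point-pairs and $V = \vrts_r$ the varieties, so that $\en_M = \sum_r I(P,V)$. Rather than invoking Proposition~\ref{Pts vars inc} as a black box, I would re-run its H\"older-plus-cubing argument while tracking the $M$-restriction, obtaining a per-distance bound of the shape $I(P,V) = O\bigl(|P| + M^{1/3}|P|^{2/3}|V|^{2/3} + |P|^{2/3}|V|\bigr)$ or similar, where the exponent on $M$ is now lower because over $\RR$ we can feed in the true distinct-distances input. Summing over $r$ and applying H\"older to the sums $\sum_r m_r$ and $\sum_r m_r^2$ as in Corollary~\ref{Q bound} reduces everything to two quantities: $\sum_r m_r \le N^2$, and the second moment $\sum_r m_r^2$, which by Lemma~\ref{IT to L2} is at most $N$ times the number $\IT$ of ordered isosceles triangles.

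The place where the real field does genuine work, and where the $\log^{1/2}(N)\,N^{5/2}$ term comes from, is the bound on $\sum_r m_r^2$. Over $\RR$ I would replace the Stevens--de Zeeuw incidence input used in Lemma~\ref{IT} by the Guth--Katz distinct-distances machinery and Beck's theorem, as the paper's introduction signals. Concretely, $\sum_r m_r^2$ counts ordered pairs of point-pairs at a common distance, and the Guth--Katz bound on the number of such ``congruent-distance'' quadruples gives $\sum_r m_r^2 = O(N^3\log N)$ up to the contribution of points concentrated on a single line or circle; Beck's theorem is what lets me say that, away from a curve carrying $\Omega(N)$ points, the point set spans $\Omega(N^2)$ lines and the incidence count behaves generically. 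Combining the incidence bound with $\sum_r m_r^2 = O(N^3 \log N)$ and optimizing the two resulting terms produces the main term $MN^2$ from the degenerate (curve-restricted) part and the term $\log^{1/2}(N)\,N^{5/2}$ from the generic part.

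The main obstacle I anticipate is bookkeeping the degenerate case cleanly under the $R_M$ restriction: I must verify that when the three circles or lines of Lemma~\ref{lem:LSdZ} coincide into a single curve $\gamma$, every point-pair $(a,b)$ contributing through $\gamma$ actually lies on a curve containing more than the admissible number of points, so that membership in $R_M$ genuinely caps this contribution by $M$ rather than by something larger. A secondary subtlety is that over $\RR$ with the standard quadratic form there are no isotropic lines, which simplifies the case analysis of the finite-field argument, but I must still rule out the points at zero distance and confirm that $\bis(a,b)$ is well defined throughout; this is where the absence of characteristic-$2$ and isotropy issues makes the real case strictly easier than Theorem~\ref{Pinned dist ff}. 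Once these two points are handled, the optimization balancing $M^{1/3}$-type terms against the second-moment bound is routine.
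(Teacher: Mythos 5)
Your overall architecture (a per-distance incidence bound, then summation over $r$ using $\sum_r m_r \le N^2$ and the Guth--Katz bound $\sum_r m_r^2 = O(N^3\log N)$) matches the paper, but the per-distance bound you propose is too weak to yield the theorem, and this is a genuine gap rather than bookkeeping. If you re-run the H\"older-plus-cubing argument of Proposition~\ref{Pts vars inc}, the non-degenerate case (at least two of the three curves from Lemma~\ref{lem:LSdZ} distinct) contributes $n^3$ triples and hence a term $|P|^{2/3}|V|$; with $|P|=|V|=m_r$ this is $m_r^{5/3}$, and H\"older gives only $\sum_r m_r^{5/3} \le (\sum_r m_r)^{1/3}(\sum_r m_r^2)^{2/3} = O(N^{8/3}\log^{2/3}N)$, which exceeds the claimed $N^{5/2}\log^{1/2}(N)$ (and the loss is real: it is attained when the $m_r$ are all comparable). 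No lowering of the exponent on $M$ repairs this, since the offending term carries no $M$ at all. The paper's Proposition~\ref{Real pts vars} instead proves the stronger bound $I_M(P,V)=O(M|P|+|P|^{3/2}+|P|^{1/2}|V|)$, i.e.\ $O(Mm_r+m_r^{3/2})$ per distance, whose sum is $O(MN^2+N^{5/2}\log^{1/2}(N))$ by a single application of Cauchy--Schwarz.

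The missing idea is how to get the exponent $3/2$ in place of $5/3$: one must bound the second moment $\sum_x i(x)^2$ (Cauchy--Schwarz) rather than the third, and the obstruction to doing so --- two varieties $S_{ac}$, $S_{a'c'}$ can share up to $M$ points of $P$ when the curve of Lemma~\ref{lem:LSdZ} is rich, which is what forced the third-moment detour in the finite-field setting --- is overcome over $\RR$ by applying Beck's theorem \emph{pointwise}. For each $x=(b,d)$ with $i(x)>CM$, the first coordinates $a$ of the incident varieties are $i(x)$ distinct points, at most $M< i(x)/C$ of which lie on any one of the relevant circles or lines through $b$ (this is where the restriction $(a,b)\in R_M$ actually enters, not in the ``all three curves coincide'' case you describe, and the $MN^2$ term comes simply from the pairs $x$ with $i(x)\le CM$); after an inversion centred at $b$, Beck's theorem shows that $\Omega(i(x)^2)$ of the pairs $(v,v')$ determine a curve containing only $O(1)$ of these points, and the resulting triple count is $O(n^2+m^2)$. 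Your placement of Beck's theorem inside the estimate for $\sum_r m_r^2$ is also off target: the Guth--Katz bound $\sum_r m_r^2=O(N^3\log N)$ is unconditional and needs no such input. As written, your argument would establish only $\en_M=O(MN^2+N^{8/3}\log^{2/3}(N))$.
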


If no circle or line contains more than $M$ points of $A$, then $\en_M = \en$, and we recover Theorem \ref{Real bis en}.
Theorem \ref{thm:distinctBisectors} follows from the arguments in \cite{lund2016refined} by using Theorem \ref{thm:refinedRealBisEnergy} in place of \cite[Lemma 11]{lund2016refined}.

The main step of the proof of Theorem \ref{thm:refinedRealBisEnergy} is analogous to Proposition \ref{Pts vars inc}, and we rely on the definitions of $\Pi_r$ and $S_r$ given at the beginning of Section \ref{sec:pinnedDistances}.
Given any sets $P \subset \Pi_r$ and $V \subset S_r$, we denote by $I_M(P,V)$ the number of incidences between points of $P_r$ and surfaces in $\vrts_r$, not counting any incidence $((b,d),S_{ac})$ such that the pair $(a,b)$ is contained in a circle or line that contains more than $M$ points of $A$.

\begin{proposition} \label{Real pts vars}
	Let $A \subset \RR^2$.
	If $P \subseteq \Pi_r$, and $V \subseteq \vrts_r$, then
	\[
	I_M(P, V) = O(M|P| + |P|^{3/2} + |P|^{1/2} |V|).
	\] 
\end{proposition}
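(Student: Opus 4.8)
The plan is to bound $I_M(P,V)$ by a Cauchy--Schwarz reduction to a count of pairs of varieties meeting at a common point of $P$, and then to analyze such meetings through the rotation that Lemma~\ref{lem:LSdZ} attaches to them. For $x \in P$ write $i_M(x)$ for the number of varieties in $V$ that are counted-incident to $x$, so that $I_M(P,V) = \sum_x i_M(x)$. Cauchy--Schwarz over the $|P|$ points gives
\[ I_M(P,V) \le |P|^{1/2}\Big(\sum_x i_M(x)^2\Big)^{1/2} \le |P|^{1/2}\big(I_M(P,V) + W\big)^{1/2}, \]
where $W = \sum_x i_M(x)(i_M(x)-1)$ counts triples $(x,v_1,v_2)$ with $v_1 \ne v_2$ and $x$ counted-incident to both. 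Solving this inequality, it suffices to prove $W = O(M^2|P| + |P|^2 + |V|^2)$, since then $I_M(P,V) = O(|P| + |P|^{1/2}W^{1/2}) = O(M|P| + |P|^{3/2} + |P|^{1/2}|V|)$.

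So I would fix a contributing triple with $x = (b,d)$, $v_1 = S_{a_1c_1}$, $v_2 = S_{a_2c_2}$, and unpack its geometry. Since $x \in S_{a_1c_1} \cap S_{a_2c_2}$, Lemma~\ref{lem:LSdZ} places $a_1,a_2,b$ on a common circle or line $C$ (and $c_1,c_2,d$ on a second such curve). Because the incidences are counted, the pair $(a_1,b) \subset C$ lies on no circle or line carrying more than $M$ points of $A$, so $C$ is \emph{poor}: $|C \cap A| \le M$. Writing $c_i = r_{\bis(a_i,b)}(d)$, the two reflections compose to the positively oriented isometry $\rho = r_{\bis(a_2,b)} \circ r_{\bis(a_1,b)}$, which by Lemma~\ref{Lemma 10} is the rotation about the circumcenter of $\{a_1,a_2,b\}$ (the center of $C$) carrying $a_1$ to $a_2$; moreover $\rho(c_1) = c_2$ and $d$ is determined by $c_1$. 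Thus each triple is encoded by a pair $(a_1,a_2)$ lying with $b$ on a poor curve $C$, together with a point $c_1 \in A$ whose image $\rho(c_1)$ also lies in $A$.

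Summing over triples in this encoding groups $W$ according to the rotation $\rho$: the contribution from $c_1$ is the number $N(\rho) = |\{c \in A : \rho(c) \in A\}|$ of point-pairs of $A$ related by $\rho$, while the contribution from $(a_1,a_2,b)$ records triples on a common poor circle about the center of $\rho$. This is exactly the Elekes--Sharir configuration underlying the Guth--Katz distinct-distance theorem: the rotations carrying one point of $A$ to another are parametrized by lines, and the number of rotations for which $N(\rho)$ is large is controlled by the point--line incidence bound of Guth and Katz. The poorness of $C$ caps the ``triple'' factor by $O(M)$ per relevant curve, and I expect this to convert the distinct-distance energy estimate into the three terms $M^2|P|$, $|P|^2$, and $|V|^2$, with the first coming from curves that are rich up to the threshold $M$ and the latter two as the diagonal and off-diagonal parts of the rotation count.

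The crux is this last step. The naive estimate, using only that two counted varieties meet in at most $M$ points, is a Kővári--Sós--Turán bound $W = O(M|V|^2)$, which yields merely $I_M(P,V) = O(M^{1/2}|P|^{1/2}|V| + |P|)$, of the type already recorded in~(\ref{eq:KSTbounds}). Removing the surplus factor $M^{1/2}$ demands genuinely two-dimensional input: one must show that a rotation $\rho$ cannot simultaneously be supported on many point-pairs of $A$ \emph{and} arise from many poor curves, and it is here that the full strength of the Guth--Katz theorem is needed. The degenerate pieces of $S_{ac}$, namely the loci $\{(a,d) : a \in \bis(c,d)\}$ and $\{(b,d) : d \in \bis(a,b)\}$, together with the case in which $C$ is a line and $\rho$ degenerates to a translation, should be routine and contribute only to the lower-order terms $M|P|$ and $|P|$.
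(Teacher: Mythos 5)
Your setup is sound as far as it goes: the Cauchy--Schwarz reduction to $W=\sum_x i_M(x)(i_M(x)-1)$, the target $W = O(M^2|P|+|P|^2+|V|^2)$, and the observation via Lemma~\ref{lem:LSdZ} that each contributing triple $(x,v_1,v_2)$ places $a_1,a_2,b$ on a common circle or line $C$ with $|C\cap A|\le M$ (because $(a_1,b)\in R_M$) all match the skeleton of the paper's argument. But the proof stops exactly where the work begins. You defer ``the crux'' to the Elekes--Sharir/Guth--Katz machinery, and that machinery does not deliver the stated bound. Grouping the triples by the rotation $\rho$ and bounding the contribution of each $\rho$ by (number of $b$ on the relevant poor curve) times (number of admissible $c_1$ with $\rho(c_1)\in A$) times (number of admissible $a_1$ with $\rho(a_1)\in A$) gives at best $M\,N(\rho)^2$ per rotation, and $\sum_\rho N(\rho)^2 = O(N^3\log N)$ then yields only $W=O(MN^3\log N)$. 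That is far weaker than $O(M^2|P|+|P|^2+|V|^2)$ in the regime that matters, since $P\subseteq\Pi_r$ and $V\subseteq\vrts_r$ live over a \emph{single} distance $r$ and are typically much smaller than $N^2$. In the paper, Guth--Katz enters only \emph{after} the proposition, to control $\sum_r m_r^{3/2}$; it plays no role in the incidence bound itself.

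The missing engine is different and more elementary. The paper first peels off the points with $i(x)\le CM$, which contribute $O(M|P|)$ outright. For a point $x=(b,d)$ with $i(x)>CM$, the $i(x)$ admissible first coordinates $a$ are distinct, and after inversion at $b$ they become $i(x)$ points with at most $M<i(x)/C$ on any line; Beck's theorem then guarantees that a positive proportion of the $i(x)^2$ ordered pairs $(v,v')$ span a curve $\Gamma_{vv'}$ through $b$ containing at most $C'=O(1)$ elements of the first-coordinate projection of $V$. Hence $\sum_{i(x)>CM} i(x)^2=O(T)$, where $T$ counts triples $(x,v,v')$ with $\Gamma_{vv'}$ poor in this $O(1)$ sense. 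Finally $T=O(|V|^2+|P|^2)$ by a dichotomy: if $b$ is the only first coordinate of a point of $P$ on $\Gamma_{vv'}$, the triple is determined by $(v,v')$; otherwise two points of $P$ lie on both $S_v$ and $S_{v'}$, they determine $\Gamma_{vv'}$ by Lemma~\ref{lem:LSdZ}, and then there are only $O(1)$ choices for $(v,v')$. Without Beck's theorem (or some substitute that lowers the richness of the relevant curves from $M$ to $O(1)$) and this double count, your argument reverts to the K\H{o}v\'ari--S\'os--Tur\'an bound that you yourself note is insufficient.
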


\begin{proof}
	We set $|P|=m$ and $|V| = n$. We also denote ordered pairs of points $(b,d) \in P$ by $x$ and ordered pairs of points $(a,c)$ such that $S_{ac} \in V$ by $v$. We set $V_1 \subset \RR^2$ to be the ``first coordinate projection'' of $V$: $V_1 = \{ a : S_{ac} \in V \text{ for some $c \in A$}\}$.
	
	For each $(b,d) \in P$ we denote by
	\[
	i((b,d)) = |\{ S_{ac} \in V : (b,d) \in S_{ac}, \, (a,b) \in R_M \}|.
	\]
	Hence
	\[
	I_M(P,V) = \sum_{x \in P} i(x).
	\]
	
	We begin by bounding the contribution coming from those $x$ for which $i(x) \leq CM$ for some sufficiently large (and absolute) $C$. The value of $C$ depends on Beck's theorem \cite{beck1983lattice}, and will only be implicitly determined. Set 
	\[
	P_1 = \{ x \in P : i(x) \leq CM\}.
	\]
	We immediately get
	\[
	I_M(P_1, V) = \sum_{x \in P_1} i(x) \leq CM |P_1| \leq C M m.
	\]
	We are left with bounding the contribution to $I(P,V)$ coming from
	\[
	P_2 = P \minus P_1 = \{ x \in P : i(x) > C M\}.
	\]
	
	A first step is to apply Cauchy--Schwarz for
	\[
	I_M(P_2, V) = \sum_{x \in P_2} i(x) \leq |P_2|^{1/2} \left(\sum_{x \in P_2} i(x)^2 \right)^{1/2} \leq m^{1/2} \left(\sum_{x \in P_2} i(x)^2 \right)^{1/2}.
	\]
	The second, and more substantial, step is to use Beck's theorem to bound $i(x)^2$ for $x \in P_2$. 
	
	Let $x = (b,d) \in P^2$, which we temporarily fix. 
	The point $x$ is incident to $i(x)$ varieties $S_{ac} \in V$ such that $(a,b) \in R_M$.
	It is crucial to note that all these $a$ are distinct (in other words, if $(b,d) \in S_{ac}$ and $(b,d) \in S_{ac'}$, then $c=c'$). This is because if $(b,d) \in S_{ac}$ holds and $a,b,d$ are known, then $c$ is uniquely determined - being the fourth vertex of an isosceles trapezoid. 
	We know that $x$ belongs to $ \tbinom{i(x)}{2}$ intersections of the form $S_{ac} \cap S_{a'c'}$ with $a \neq a'$ and $c \neq c'$. 
	By Lemma~\ref{lem:LSdZ}, for each of these intersections there is a line or circle containing $a,a',$ and $b$. By applying an inversion centered at $b$, we transform these lines or circles to lines. 
	We end up with $i(x)$ points (the images of the $a$), no more than $M < i(x)/C$ of which are contained in any single line. 
	By Beck's theorem~\cite{beck1983lattice}, for sufficiently large $C$, at least $c \,i(x)^2$ lines contain at most $C'$ of the $a$, for some absolute constants $c$ and $C'$. 
	After inverting back to the original setting, $i(x)^2$ is at most $c^{-1}$ times the number of ordered pairs $(v , v') \in V \times V$ such that the circle or line $\Gamma_{vv'}$ in $S_v \cap S_{v'}$ that contains both $a$, $a'$ and $b$ is incident to at most $C$ elements of $V_1$ ($V_1$ is the ``first coordinate projection'' of $V$). 
	
	Summing over all $x \in P_2$ we get
	\[
	\sum_{x \in P_2} i(x)^2 \leq c^{-1} T,
	\]
	where $T$ is the number of ordered triples $(x,v,v') \in P \times V \times V$ such that the circle or line $\Gamma_{vv'}$ in $S_v \cap S_{v'}$ that contains both $a$, $a'$ and $b$ is incident to at most $C'$ elements of $V_1$. We partition in two parts: $T = T_1 \cup T_2$, where $T_1$ is the number of ordered triples where $b$ is the only element of $P_1$ in $\Gamma_{vv'}$ and $T_2$ is the number of ordered triples for which there exist at least two elements of $P_1$ in $\Gamma_{vv'}$. We bound $T_1$ and $T_2$ separately.
	
	Each ordered triple $(x,v,v') \in T_1$ is determined by $v$ and $v'$ and so $T_1 \leq |V|^2 = n^2$.
	
	For each ordered triple in $T_2$ there exist at least two elements $b, b' \in \Gamma_{vv'}$, at least two elements $a \in \Gamma_{vv'}$, and at most $C'$ elements $a \in \Gamma_{vv'}$. Hence there exist $(b,d) , (b'd') \in P$ and $S_{ac} , S_{a'c'}$ such that both $(b,d)$ and $(b',d')$ are incident to both $S_{ac}$ and $S_{a'c'}$. By Lemma~\ref{lem:LSdZ}, the points $(b,d)$ and $(b',d')$ uniquely determine $\Gamma_{vv'}$. Once  $\Gamma_{vv'}$ has been determined there are at most $C'^2$ possibilities for $a$ and $a'$ and hence for $v$ and $v'$. Hence $T_2 \leq C'^2 |P|^2 = C'^2 m^2$.
	
	Therefore
	\begin{align*}
	I(P_2, V) 
	& \leq m^{1/2} \left(\sum_{x \in P_2} i(x)^2 \right)^{1/2} \\
	& \leq c^{-1/2} m^{1/2} (T_1 + T_2)^{1/2} \\
	& \leq c^{-1/2} m^{1/2} (n^2 + C'^2 m^2)^{1/2} \\
	& \leq c^{-1/2} m^{1/2} (n + C' m).
	\end{align*}
	
	Putting everything we get the desired bound:
	\[
	I_M(P,V) = I_M(P_1,V) + I_M(P_2,V) \leq C M m + c^{-1/2} m^{1/2} (n + C' m) = O(M m + m^{3/2} + m^{1/2} n).
	\]
\end{proof}

Theorem~\ref{Real bis en} follows from Proposition~\ref{Real pts vars} and the full strength of the celebrated Guth--Katz theorem on distinct distances \cite{guth2015erdos}.

\begin{proof}[Proof of Theorem~\ref{Real bis en}]
	Using the notation established above and setting $m_r = |\Pi_r| = |\vrts_r|$ we get from Proposition~\ref{Real pts vars}
	\[
	\en_M  = \sum_{r \in \RR} I_M(\Pi_r, \vrts_r) = O\left( \sum_{r} M m_r + m_r^{3/2} \right).
	\]
	It is immediate that 
	\[
	\sum_r m_r = |A|^2 = N^2.
	\]
	Guth and Katz proved the optimal upper bound \cite{guth2015erdos}
	\[
	\sum_r m_r^2 = O(\log(N) N^3).
	\] 
	These imply, via Cauchy-Schwarz,
	\[
	\sum_r m_r^{3/2} \leq \left( \sum_r m_r \right)^{1/2} \left( \sum_r m_r^2 \right)^{1/2} = O(N^{5/2} \log^{1/2}(N)).
	\]
	This finally gives the desired bound 
	\[
	\en_M = O(M N^2 + N^{5/2} \log^{1/2}(N)).
	\]
\end{proof}

\phantomsection

\addcontentsline{toc}{section}{References}
% Non-BibTeX users please use

Ben Lund, Department of Mathematics, Princeton University, Princeton, NJ 08544, USA.

\hspace{20pt} Email address: \emph{lund.ben@gmail.com}

Giorgis Petridis, Department of Mathematics, University of Georgia, Athens, GA 30602, USA.

\hspace{20pt} Email address: \emph{giorgis@cantab.net}

\end{document}